\theoremstyle{definition}
\newtheorem{defi}{Definition}[section]
\theoremstyle{plain}
\newtheorem{thm}[defi]{Theorem}
\newtheorem{lemma}[defi]{Lemma}
\newtheorem{prop}[defi]{Proposition}
\theoremstyle{remark}
\newtheorem{ex}[defi]{Example}
\newtheorem{rem}[defi]{Remark}
\newcommand{\scaleValue}{0.35}
\newcommand{\ba}[1]{\begin{equation*} \begin{array}{#1}}
\newcommand{\ea}{\end{array} \end{equation*}} 
\newcommand{\noi}{\noindent}
\newcommand{\mbf}{\mathbf}
\newcommand{\mbb}{\mathbb}
\newcommand{\mfk}{\mathfrak}
\newcommand{\mcl}{\mathcal}
\newcommand{\ra}{\rightarrow}
\newcommand{\lra}{\longrightarrow}
\newcommand{\lma}{\longmapsto}
\newcommand{\gra}{\mbf{Gra}}
\newcommand{\lieb}{\mbf{Lieb}}
\newcommand{\lie}{\mbf{Lie}}
\newcommand{\assB}{\mbf{Assb}}
\newcommand{\dFun}{\mcl{D}}
\newcommand{\oFun}{\mathcal{O}}
\newcommand{\dLieb}[1]{\dFun(\lieb_{#1})}
\newcommand{\dLiebC}[1]{\dFunC(\lieb_{#1})}
\newcommand{\edom}{\mbf{End}}
\newcommand{\free}[1]{\text{#1}}
\newcommand{\grt}{\mfk{grt}}
\newcommand{\GRT}{\mbf{GRT}}
\newcommand{\sgn}{\text{sgn}}
\newcommand{\Span}{\text{span}}
\newcommand{\hoLieb}{\mbf{HoLieb}}
\newcommand{\deform}{\mbf{Def}}
\newcommand{\fcgc}{\mbf{fcGC}}
\newcommand{\fgc}{\mbf{fGC}}
\newcommand{\GC}{\mbf{GC}}
\newcommand{\dfgc}{\mbf{dfGC}}
\newcommand{\dFunC}{\mcl{D}_c}
\newcommand{\defDLieb}{\deform(\lieb_{c,d} \stackrel{i}{\ra} \dLiebC{c,d})}
\newcommand{\defLieb}{\deform(\lieb_{c,d} \stackrel{id}{\ra} \lieb_{c,d})}
\newcommand{\sym}{\mathbf{Sym}}
\newcommand{\icg}{\includegraphics[scale=0.3,valign=c]}
\newcommand{\icgDec}{\includegraphics[scale=0.3]}
\title{Polydifferential Lie bialgebras and graph complexes}
\author{Vincent \textsc{Wolff}}
\address{Mathematics Research Unit, University of Luxembourg, Maison du Nombre, 6 Avenue de la Fonte,
 L-4364 Esch-sur-Alzette, Grand Duchy of Luxembourg }
\email{vincent.wolff@uni.lu}
\begin{document}

\begin{abstract}
    We study the deformation complex of a canonical morphism $i$ from the properad of (degree shifted) Lie bialgebras $\lieb_{c,d}$ to its  polydifferential version $\dLieb{c,d}$ and show that it is quasi-isomorphic to the oriented graph complex $\GC^{\text{or}}_{c+d+1}$, up to one rescaling class. As the latter complex is quasi-isomorphic to the original graph complex $\GC_{c+d}$, we conclude that the space of homotopy non-trivial infinitesimal deformations of the canonical map $i$ can be identified with the Grothendieck-Teichmüller Lie algebra $\grt$; moreover, every such an infinitesimal deformation extends to a genuine deformation of the canonical morphism $i$ from $\lieb_{c,d}$ to $\dFun(\lieb_{c,d})$. The full deformation complex is described with the help of a new graph complex of so called entangled graphs, whose suitable quotient complex is shown to contain the tensor product $H(\GC_c) \otimes H(\GC_d)$ of cohomologies of Kontsevich graph complexes $\GC_c \otimes \GC_d$.
\end{abstract}

\maketitle

\begin{section}{Introduction}

The theory of operads and props plays a very important role nowadays in many branches of mathematics and mathematical physics, including homotopy algebra, algebraic topology, Lie theory and deformation quantization theory. The two most famous deformation quantization theories, the quantizations of Poisson structures \cite{MR2062626} and of Lie bialgebras \cite{MR1183474,MR1403351}, can be usefully formulated in terms of morphisms of suitable operads/props to  polydifferential closures of other operads/props (see \cite{MR4179596} and \cite{MR4436207}). This interpretation is very helpful in the homotopy classification of the aforementioned deformation quantization theories. These polydifferential closures are constructed in turn with the help of the polydifferential endofunctor $\dFun$ in the category of props \cite{MR4179596}

\[
\dFun:\mbf{Props} \lra \mbf{Props},
\]

and its reduced version

\[
\oFun:\mbf{Props} \lra \mbf{Operads},
\]

which was introduced in an earlier paper \cite{merkulov2015props}. The key property of the functor $\dFun$ applied to any concrete prop $P$ is the following one: for any representation $P \ra \edom_V$ in a dg vector space $V$, there is an associated representation $\dFun(P) \ra \edom_{\odot V}$ in the symmetric tensor algebra $\odot V$ of $V$, given in terms of polydifferential (with respect to the standard product in $\odot V$) operators.

Applying the functor $\oFun$ to the (prop closure of the) operad $\lie_d$ of (degree shifted) Lie algebras we obtain the operad $\oFun(\lie_d)$ of polydifferential Lie algebras which comes equipped \cite{wolff2023} with a natural morphism of operads

\begin{equation}
\label{eqn:morphism_operads}
\lie_d \lra \oFun(\lie_d).
\end{equation}

The main result of op. cit. paper identifies the deformation complex of this morphism with the standard Kontsevich graph complex $\GC_d$.

In the present paper we apply the functor $\dFun$ to the (prop closure of the) properad $\lieb_{c,d}$ of (degree shifted) Lie bialgebras to obtain a prop $\dFun(\lieb_{c,d})$. This prop is very useful in the study of the deformation theory of Lie bialgebras \cite{MR1183474,MR1403351}, as one can interpret any universal deformation quantization of Lie bialgebras as a morphism of props \cite{MR4179596}

\[
\assB \lra \dLieb{1,1}
\]

satisfying certain non-triviality conditions; here $\assB$ stands for the prop of associative bialgebras. This interpretation of the deformation quantization theory was used in \cite{MR4179596} to classify all homotopy non-trivial universal quantizations in terms of the Kontsevich graph complex. The prop $\dLieb{c,d}$ contains naturally a properad $\dFunC(\lieb_{c,d})$ spanned by connected decorated graphs, called the properad of polydifferential Lie bialgebras. Similarly as in the Lie case, there is a natural morphism of properads

\begin{equation}
\label{eqn:morphism_properads}
i:\lieb_{c,d} \lra \dLiebC{c,d}    
\end{equation}

(see Lemma \ref{lemma:morphism_lieb_dlieb} for the explicit formulae). The main purpose of this paper is to compute the cohomology of the associated deformation complex $\defDLieb$ and to show that it is quasi-isomorphic to the deformation complex $\defLieb$. The latter complex has been studied in \cite{merkulov_deformation_liebialgebras,merkulov2015props}, where it was shown that there are isomorphisms of cohomology groups

\[
H^{\bullet +1}(\defLieb) = H^\bullet(\GC^{\text{or}}_{c+d+1})=H^\bullet(\GC_{c+d}),
\]

where $\GC^{\text{or}}_{c+d+1}$ is the oriented graph complex. Thus the deformation complexes of both maps $(\ref{eqn:morphism_operads})$ and $(\ref{eqn:morphism_properads})$ have the same cohomology but for different reasons: in the Lie case, the inclusion map $\deform(\lie_d \stackrel{id}{\ra}\lie_d)$ into $\deform(\lie_d \ra \oFun(\lie_d))$ is almost trivial, while in the second case all the cohomology comes from the corresponding inclusion.


There is a monomorphism of deformation complexes

\[
0 \lra \defLieb \lra \defDLieb.
\]

We prove in this paper that this monomorphism is a quasi-isomorphism by studying the quotient complex $\fcgc_{c,d}$ of so called \textit{entangled graphs} and computing its cohomology. It is worth noting that $\fcgc_{c,d}$ contains a subcomplex $\GC_{c,d}^1$ spanned by univalent entangled graphs and that its quotient $\GC_{c,d}^{\geq 2}$ has rich cohomology. We show in particular that there is an injection of cohomology groups

\[
H(\GC_{c}) \otimes H(\GC_d) \lra H(\GC_{c,d}^{\geq 2})
\]

for any $c$ and $d$. For $c=d=2$ the cohomology in degree $0$ contains thus $\grt_1 \otimes \grt_1$. Note that there is a similar monomorphism 

\[
\deform(\lie_d \ra \lie_d) \lra \deform(\lie_d \ra \oFun(\lie_d))
\]

in the case of Lie algebras. However, by contrast to the above case of $\lieb_{c,d}$, all the non-trivial cohomology of $\deform(\lie_d \ra \oFun(\lie_d))$ sits in the quotient complex as shown in \cite{wolff2023}. So both complexes are quasi-isomorphic to one and the same Kontsevich graph complex, but for different reasons.

\end{section}

\section*{Acknowledgments}

I am very grateful to my supervisor Sergei Merkulov for his support and many valuable discussions.

\begin{section}{Lie bialgebras}

We recall the definition of the properad $\lieb_{c,d}$ governing Lie bialgebras.

Consider the free properad $\text{E}$ spanned by the $\mbb{S}$-bimodule $E=\{E(m,n)\}$ given by

\[
E(m,n):=\left\{ \begin{array}{ll}
\sgn_2^{\otimes |d|} \otimes 1_2[d-1] = \Span \langle\begin{tikzpicture}[baseline={([yshift=-0.5ex]current bounding box.center)},scale=0.3,transform shape]
\node (v0) at (0,-1.25) {};
\node (v1) [circle,draw,fill] at (0,0) {};
\node (v2) at (-1,1) {\huge $1$};
\node (v3) at (1,1) {\huge $2$};
\draw (v0) edge (v1);
\draw (v1) edge (v2);
\draw (v1) edge (v3);
\end{tikzpicture} = (-1)^d \begin{tikzpicture}[baseline={([yshift=-0.5ex]current bounding box.center)},scale=0.3,transform shape]
\node (v0) at (0,-1.25) {};
\node (v1) [circle,draw,fill] at (0,0) {};
\node (v2) at (-1,1) {\huge $2$};
\node (v3) at (1,1) {\huge $1$};
\draw (v0) edge (v1);
\draw (v1) edge (v2);
\draw (v1) edge (v3);
\end{tikzpicture}
\rangle & \text{if m=2,n=1}, \\
1_2 \otimes \sgn_2^{\otimes |c|}[c-1] = \Span \langle \begin{tikzpicture}[baseline={([yshift=-0.5ex]current bounding box.center)},scale=0.3,transform shape]
\node (v0) at (0,1.25) {};
\node (v1) [circle,draw,fill] at (0,0) {};
\node (v2) at (-1,-1) {\huge $1$};
\node (v3) at (1,-1) {\huge $2$};
\draw (v0) edge (v1);
\draw (v1) edge (v2);
\draw (v1) edge (v3);
\end{tikzpicture}= (-1)^c \begin{tikzpicture}[baseline={([yshift=-0.5ex]current bounding box.center)},scale=0.3,transform shape]
\node (v0) at (0,1.25) {};
\node (v1) [circle,draw,fill] at (0,0) {};
\node (v2) at (-1,-1) {\huge $2$};
\node (v3) at (1,-1) {\huge $1$};
\draw (v0) edge (v1);
\draw (v1) edge (v2);
\draw (v1) edge (v3);
\end{tikzpicture} \rangle & \text{if m=1,n=2}, \\
0 & \text{otherwise,}
\end{array}\right.
\]

where

\[
\sgn_n^{\otimes |d|}= \left\{ \begin{array}{ll}
   \sgn_n  &  \text{if $d$ is odd,} \\
    1_n & \text{if $d$ is even,} 
\end{array} \right.
\]

where $\sgn_n$ denotes the one-dimensional sign representation and $1_n$ denotes the trivial representation of $\mbb{S}_n$. The properad $\lieb_{c,d}$ is defined to be the quotient $\free{E}/I$ where $I$ is generated by the following relations

\begin{equation*} \left\{ \begin{array}{l}\begin{tikzpicture}[baseline={([yshift=-0.5ex]current bounding box.center)},scale=0.35,transform shape]
\node (v0) at (0,1) {};
\node (v1) [circle,draw,fill] at (0,0) {};
\node (v2) [circle,draw,fill] at (-1,-1) {};
\node (v3) at (1,-1) {\huge $3$};
\node (v4) at (0,-2) {\huge $2$};
\node (v5) at (-2,-2) {\huge $1$};
\draw (v0) edge (v1);
\draw (v1) edge (v2);
\draw (v1) edge (v3);
\draw (v2) edge (v4);
\draw (v2) edge (v5);
\end{tikzpicture}
+
\begin{tikzpicture}[baseline={([yshift=-0.5ex]current bounding box.center)},scale=0.35,transform shape]
\node (v0) at (0,1) {};
\node (v1) [circle,draw,fill] at (0,0) {};
\node (v2) [circle,draw,fill] at (-1,-1) {};
\node (v3) at (1,-1) {\huge $1$};
\node (v4) at (0,-2) {\huge $3$};
\node (v5) at (-2,-2) {\huge $2$};
\draw (v0) edge (v1);
\draw (v1) edge (v2);
\draw (v1) edge (v3);
\draw (v2) edge (v4);
\draw (v2) edge (v5);
\end{tikzpicture}
+
\begin{tikzpicture}[baseline={([yshift=-0.5ex]current bounding box.center)},scale=0.35,transform shape]
\node (v0) at (0,1) {};
\node (v1) [circle,draw,fill] at (0,0) {};
\node (v2) [circle,draw,fill] at (-1,-1) {};
\node (v3) at (1,-1) {\huge $2$};
\node (v4) at (0,-2) {\huge $1$};
\node (v5) at (-2,-2) {\huge $3$};
\draw (v0) edge (v1);
\draw (v1) edge (v2);
\draw (v1) edge (v3);
\draw (v2) edge (v4);
\draw (v2) edge (v5);
\end{tikzpicture}
=0 \\[12pt]
\begin{tikzpicture}[baseline={([yshift=-0.5ex]current bounding box.center)},scale=0.35,transform shape]
\node (v0) at (0,-2) {};
\node (v1) [circle,draw,fill] at (0,-1) {};
\node (v2) [circle,draw,fill] at (-1,0) {};
\node (v3) at (1,0) {\huge $3$};
\node (v4) at (0,1) {\huge $2$};
\node (v5) at (-2,1) {\huge $1$};
\draw (v0) edge (v1);
\draw (v1) edge (v2);
\draw (v1) edge (v3);
\draw (v2) edge (v4);
\draw (v2) edge (v5);
\end{tikzpicture}
+
\begin{tikzpicture}[baseline={([yshift=-0.5ex]current bounding box.center)},scale=0.35,transform shape]
\node (v0) at (0,-2) {};
\node (v1) [circle,draw,fill] at (0,-1) {};
\node (v2) [circle,draw,fill] at (-1,0) {};
\node (v3) at (1,0) {\huge $1$};
\node (v4) at (0,1) {\huge $3$};
\node (v5) at (-2,1) {\huge $2$};
\draw (v0) edge (v1);
\draw (v1) edge (v2);
\draw (v1) edge (v3);
\draw (v2) edge (v4);
\draw (v2) edge (v5);
\end{tikzpicture}
+
\begin{tikzpicture}[baseline={([yshift=-0.5ex]current bounding box.center)},scale=0.35,transform shape]
\node (v0) at (0,-2) {};
\node (v1) [circle,draw,fill] at (0,-1) {};
\node (v2) [circle,draw,fill] at (-1,0) {};
\node (v3) at (1,0) {\huge $2$};
\node (v4) at (0,1) {\huge $1$};
\node (v5) at (-2,1) {\huge $3$};
\draw (v0) edge (v1);
\draw (v1) edge (v2);
\draw (v1) edge (v3);
\draw (v2) edge (v4);
\draw (v2) edge (v5);
\end{tikzpicture}
=0 \\[12pt]
\begin{tikzpicture}[baseline={([yshift=-0.5ex]current bounding box.center)},scale=0.35,transform shape]
\node (v0) [circle] at (0,3) {\huge $1$};
\node (v1) [circle] at (2,3) {\huge $2$};
\node (v2) [circle,draw,fill] at (1,2) {};
\node (v3) [circle,draw,fill] at (1,1) {};
\node (v4) [circle] at (0,0) {\huge $1$};
\node (v5) [circle] at (2,0) {\huge $2$};
\draw (v0) edge (v2);
\draw (v1) edge (v2);
\draw (v2) edge (v3);
\draw (v3) edge (v4);
\draw (v3) edge (v5);
\end{tikzpicture}
-
\begin{tikzpicture}[baseline={([yshift=-0.5ex]current bounding box.center)},scale=0.35,transform shape]
\node (v0) [circle] at (-1,2.5) {\huge $1$};
\node (v1) [circle,draw,fill] at (0,1) {};
\node (v2) [circle] at (0,0) {\huge $1$};
\node (v3) [circle,draw,fill] at (1,1.5) {};
\node (v4) [circle] at (1,3) {\huge $2$};
\node (v5) [circle] at (2,0) {\huge $2$};
\draw (v0) edge (v1);
\draw (v2) edge (v1);
\draw (v3) edge (v1);
\draw (v3) edge (v4);
\draw (v3) edge (v5);
\end{tikzpicture}
-(-1)^c \begin{tikzpicture}[baseline={([yshift=-0.5ex]current bounding box.center)},scale=0.35,transform shape]
\node (v0) [circle] at (-1,2.5) {\huge $1$};
\node (v1) [circle,draw,fill] at (0,1) {};
\node (v2) [circle] at (0,0) {\huge $2$};
\node (v3) [circle,draw,fill] at (1,1.5) {};
\node (v4) [circle] at (1,3) {\huge $2$};
\node (v5) [circle] at (2,0) {\huge $1$};
\draw (v0) edge (v1);
\draw (v2) edge (v1);
\draw (v3) edge (v1);
\draw (v3) edge (v4);
\draw (v3) edge (v5);
\end{tikzpicture}
-(-1)^{c+d}\begin{tikzpicture}[baseline={([yshift=-0.5ex]current bounding box.center)},scale=0.35,transform shape]
\node (v0) [circle] at (-1,2.5) {\huge $2$};
\node (v1) [circle,draw,fill] at (0,1) {};
\node (v2) [circle] at (0,0) {\huge $2$};
\node (v3) [circle,draw,fill] at (1,1.5) {};
\node (v4) [circle] at (1,3) {\huge $1$};
\node (v5) [circle] at (2,0) {\huge $1$};
\draw (v0) edge (v1);
\draw (v2) edge (v1);
\draw (v3) edge (v1);
\draw (v3) edge (v4);
\draw (v3) edge (v5);
\end{tikzpicture}
-(-1)^d\begin{tikzpicture}[baseline={([yshift=-0.5ex]current bounding box.center)},scale=0.35,transform shape]
\node (v0) [circle] at (-1,2.5) {\huge $2$};
\node (v1) [circle,draw,fill] at (0,1) {};
\node (v2) [circle] at (0,0) {\huge $1$};
\node (v3) [circle,draw,fill] at (1,1.5) {};
\node (v4) [circle] at (1,3) {\huge $1$};
\node (v5) [circle] at (2,0) {\huge $2$};
\draw (v0) edge (v1);
\draw (v2) edge (v1);
\draw (v3) edge (v1);
\draw (v3) edge (v4);
\draw (v3) edge (v5);
\end{tikzpicture}=0.
\end{array} \right.
\end{equation*}

Its minimal resolution $\hoLieb_{c,d}$ is generated by the $\mbb{S}$-bimodule $E=\{E(m,n)\}_{m,n \geq 1 ,m+n \geq 3}$ given by

\[
E(m,n)=\sgn_m^{\otimes|d|} \otimes \sgn_n^{\otimes|c|}[-(1+c(1-m)+d(1-n))]=\Span \langle\begin{tikzpicture}[baseline={([yshift=-0.5ex]current bounding box.center)},scale=0.5,transform shape]
\node (v0) [circle,draw,fill] at (0,0) {};
\node (v1) at (-1,1) {\huge $1$};
\node (v2) at (1,1) {\huge $m$};
\node (v3) at (-1,-1) {\huge $1$};
\node (v4) at (1,-1) {\huge $n$};
\draw (v0) edge (v1);
\draw (v0) edge (v2);
\draw (v0) edge (v3);
\draw (v0) edge (v4);
\node at (0,-0.7) {\huge $\cdots$};
\node at (0,0.7) {\huge $\cdots$};
\end{tikzpicture}\rangle.
\]

Its differential acts on the generators by

\[
\delta \begin{tikzpicture}[baseline={([yshift=-0.5ex]current bounding box.center)},scale=0.5,transform shape]
\node (v0) [circle,draw,fill] at (0,0) {};
\node (v1) at (-1,1) {\huge $1$};
\node (v2) at (1,1) {\huge $m$};
\node (v3) at (-1,-1) {\huge $1$};
\node (v4) at (1,-1) {\huge $n$};
\draw (v0) edge (v1);
\draw (v0) edge (v2);
\draw (v0) edge (v3);
\draw (v0) edge (v4);
\node at (0,-0.7) {\huge $\cdots$};
\node at (0,0.7) {\huge $\cdots$};
\end{tikzpicture}= \sum_{\substack{I_1 \sqcup I_2 = \{1,\cdots m\} \\ J_1 \sqcup J_2 = \{1,\cdots,n\} \\ |I_1|,|J_2| \geq 0 \\ |I_2|,|J_1| \geq 1}}{(-1)^{\sigma(I_1 \sqcup I_2)+|I_1||I_2|+|J_1||J_2|}\begin{tikzpicture}[baseline={([yshift=-0.5ex]current bounding box.center)},scale=0.5,transform shape]
\node (v0) [circle,draw,fill] at (0,0) {};
\node (v1) at (-1,-1) {};
\node (v2) at (1,-1) {};
\node (v3) at (-1,1) {};
\node (v4) at (1,1) {};
\draw (v0) edge (v1);
\draw (v0) edge (v2);
\draw (v0) edge (v3);
\draw (v0) edge (v4);
\draw [decoration={brace,amplitude=5pt},decorate] (v2) -- (v1);
\draw [decoration={brace,amplitude=5pt},decorate] (v3) -- (v4);
\node at (0,0.7) {\huge $\cdots$};
\node at (0,-0.7) {\huge $\cdots$};
\node at (0,1.75) {\huge $I_1$};
\node at (0,-1.75) {\huge $J_1$};
\node (v00) [circle,draw,fill] at (2.5,0.5) {};
\draw (v0) edge (v00);
\node (v01) at (1.5,-0.5) {};
\node (v02) at (3.5,-0.5) {};
\node (v03) at (1.5,1.5) {};
\node (v04) at (3.5,1.5) {};
\draw (v00) edge (v01);
\draw (v00) edge (v02);
\draw (v00) edge (v03);
\draw (v00) edge (v04);
\draw [decoration={brace,amplitude=5pt},decorate] (v02) -- (v01);
\draw [decoration={brace,amplitude=5pt},decorate] (v03) -- (v04);
\node at (2.5,-0.2) {\huge $\cdots$};
\node at (2.5,1.2) {\huge $\cdots$};
\node at (2.5,-1.25) {\huge $J_2$};
\node at (2.5,2.25) {\huge $I_2$};
\end{tikzpicture}},
\]

where $\sigma(I_1,I_2)$ denotes the sign of the shuffle $I_1 \sqcup I_2$.

\end{section}

\begin{section}{Reminder on graph complexes and deformation theory of Lie bialgebras}

We recall the different graph complexes which we will encounter in the following. Let $G_{v,e}$ be the set of directed graphs with $v$ vertices and $e$ edges and define the full graph complex by

\[
\fgc_d := \prod_{v \geq 1}\prod_{e \geq 0} (\mbb{K}\langle G_{v,e} \rangle \otimes \mbb{K}[-d]^{\otimes v} \otimes \mbb{K}[d-1]^{\otimes e}\otimes_{\mbb{S}_v \times (\mbb{S}_2^e \ltimes \mbb{S}_e)}\text{sgn}_d^{\otimes e})[d],
\]

where $\mbb{S}_v \times (\mbb{S}_2^e \ltimes \mbb{S}_e)$ acts by renumbering vertices, renumbering edges and flipping the direction of edges. The differential in $\fgc_d$ is (up to signs) given by splitting vertices. In the present paper we will mainly consider connected graphs and thus the subcomplex $\fcgc_d \subset \fgc_d$. It has been shown \cite{MR3348138} that there is a quasi-isomorphism $\fcgc_d \ra \GC_d$ where $\GC_d$ is spanned by graphs with all vertices being at least bivalent. In addition there is a subcomplex $\GC_d^{\geq 3} \subset \GC_d$ spanned by graphs with all vertices at least trivalent and we have \cite{MR3348138} $H(\GC_d)=\bigoplus_{\substack{p \geq 1 \\ p \equiv 2d+1 \mod 4}}{\mbb{K}[d-p]} \oplus H(\GC_d^{\geq 3})$. Furthermore, for $d=2$, the cohomology in degree zero is given by the Grothendieck-Teichmüller Lie algebra $\grt_1$.

There is a directed version $\dfgc_d$ of $\fgc_d$ where we keep direction on edges, both of which are quasi-isomorphic \cite{MR3348138}. We are interested in the subcomplex $\GC_d^{or} \subset \dfgc_d$ spanned by connected graphs with all vertices being at least bivalent and with no directed cycles. For example

\[
\begin{tikzpicture}[baseline={([yshift=-0.5ex]current bounding box.center)},scale=0.5,transform shape]
\node (v0) [circle,draw] at (0,0) {};
\node (v1) [circle,draw] at (-1,-1) {};
\node (v2) [circle,draw] at (1,-1) {};
\draw [-latex] (v0) edge (v1);
\draw [-latex] (v1) edge (v2);
\draw [-latex] (v0) edge (v2);
\end{tikzpicture} \in \GC^{or}_d, \hspace{4mm} \begin{tikzpicture}[baseline={([yshift=-0.5ex]current bounding box.center)},scale=0.5,transform shape]
\node (v0) [circle,draw] at (0,0) {};
\node (v1) [circle,draw] at (-1,-1) {};
\node (v2) [circle,draw] at (1,-1) {};
\draw [-latex] (v0) edge (v1);
\draw [-latex] (v1) edge (v2);
\draw [-latex] (v2) edge (v0);
\end{tikzpicture} \notin \GC^{or}_d
\]

The cohomology of $\GC_d^{or}$ has been computed \cite{MR3312446} and is given by the cohomology of the standard graph complex $\GC_{d-1}$. This complex controls deformations of Lie bialgebras \cite{merkulov_deformation_liebialgebras}. More precisely it was shown that there is an explicit quasi-isomorphism between the deformation complex $\defLieb$ and $\GC_{c+d+1}^{or}$ and thus to $\GC_{c+d}$. In particular, in the case $c+d=2$ corresponding to the usual Lie bialgebras, there is an action (up to homotopy) of the Grothendieck-Teichmüller group $\GRT_1$.

\end{section}

\begin{section}{Polydifferential functor on props}

In \cite{MR4179596} there is an endofunctor in the category of (augmented) props

\ba{rccc}
\dFun: & \mbf{Props} & \lra & \mbf{Props} \\
& P & \lma & \dFun(P),
\ea

with the property that for any representation $P \ra \edom_V$ of a prop $P$ in some dg vector space $V$ there is an associated representation $\dFun(P) \ra \edom_{\odot V}$ of the polydifferential prop $\dFun(P)$ in the symmetric tensor algebra $\odot V$, such that elements of $P$ acts as polydifferential operators. Our interest lies in the prop $\dFun({\lieb}_{c,d})$ (technically in a properad derived from said prop, see below) obtained by applying the above functor to the prop closure $\overline{\lieb}_{c,d}$ of the properad of Lie bialgebras. Elements of the prop closure are given by disjoint unions of elements in $\lieb_{c,d}$, e.g.

\[
\begin{tikzpicture}[baseline={([yshift=-0.5ex]current bounding box.center)},scale=0.35,transform shape]
\node (v0) [circle,draw,fill] at (0,0) {};
\node (v1) at (-1,1) {\huge $1$};
\node (v2) at (1,1) {\huge $2$};
\node (v3) at (0,-1) {\huge $1$};
\draw (v0) edge (v1);
\draw (v0) edge (v2);
\draw (v0) edge (v3);
\node (v00) [circle,draw,fill] at (3,0) {};
\node (v01) at (2,-1) {\huge $2$};
\node (v02) at (4,-1) {\huge $3$};
\node (v03) at (3,1) {\huge $3$};
\draw (v00) edge (v01);
\draw (v00) edge (v02);
\draw (v00) edge (v03);
\end{tikzpicture}.
\]

Given $p \in \overline{\lieb}_{c,d}$ and a partition of the input and output vertices respectively, we obtain an element of $\dLieb{c,d}$ by combining all input respectively output vertices in each partition class into a single vertex. For example

\[
\begin{tikzpicture}[baseline={([yshift=-0.5ex]current bounding box.center)},scale=0.35,transform shape]
\node (v0) [circle,draw,fill] at (0,0) {};
\node (v1) at (-1,1) {\huge $1$};
\node (v2) at (1,1) {\huge $2$};
\node (v3) at (0,-1) {\huge $1$};
\draw (v0) edge (v1);
\draw (v0) edge (v2);
\draw (v0) edge (v3);
\node (v00) [circle,draw,fill] at (3,0) {};
\node (v01) at (2,-1) {\huge $1$};
\node (v02) at (4,-1) {\huge $2$};
\node (v03) at (3,1) {\huge $2$};
\draw (v00) edge (v01);
\draw (v00) edge (v02);
\draw (v00) edge (v03);
\end{tikzpicture}
\lra
\begin{tikzpicture}[baseline={([yshift=-0.5ex]current bounding box.center)},scale=0.35,transform shape]
\node (v0) [circle,draw] at (-1,2) {\huge $1$};
\node (v1) [circle,draw,fill] at (0,1) {};
\node (v2) [circle,draw] at (0,0) {\huge $1$};
\node (v3) [circle,draw,fill] at (1,1) {};
\node (v4) [circle,draw] at (1,2) {\huge $2$};
\node (v5) [circle,draw] at (2,0) {\huge $2$};
\draw (v0) edge (v1);
\draw (v2) edge (v1);
\draw (v3) edge (v4);
\draw (v3) edge (v5);
\draw (v3) edge (v2);
\draw (v1) edge (v4);
\end{tikzpicture}.
\]

Given $\Gamma_1,\Gamma_2 \in \dLieb{c,d}$ with $\Gamma_1,\Gamma_2$ having the same number of input respectively output vertices, the vertical composition $\Gamma_1 \square_v \Gamma_2$ is defined as follows: We erase the input respectively output vertices of $\Gamma_1$ respectively $\Gamma_2$. Next for each label $k$ we sum over all attachments of the hanging output edges labelled $k$ to the hanging input edges labelled $k$ or to the output vertices of the connected component of the output vertex $k$ of $\Gamma_2$. Finally we sum over all attachments of the remaining input edges labelled $k$ to the input vertices of the connected of the output vertex $k$ in $\Gamma_1$.

\end{section}

\begin{section}{Properad of polydifferential Lie bialgebras}

Given the polydifferential prop $\dLieb{c,d}$ associated to the properad of Lie bialgebras, we can associate a properad $\dLiebC{c,d}$, called properad of polydifferential Lie bialgebras, whose elements are required to be connected as graphs. For example

\[
\begin{tikzpicture}[baseline={([yshift=-0.5ex]current bounding box.center)},scale=0.35,transform shape]
\node (v0) [circle,draw] at (-1,2) {\huge $1$};
\node (v1) [circle,draw,fill] at (0,1) {};
\node (v2) [circle,draw] at (0,0) {\huge $1$};
\node (v3) [circle,draw,fill] at (1,1) {};
\node (v4) [circle,draw] at (1,2) {\huge $2$};
\node (v5) [circle,draw] at (2,0) {\huge $2$};
\draw (v0) edge (v1);
\draw (v2) edge (v1);
\draw (v3) edge (v4);
\draw (v3) edge (v5);
\draw (v3) edge (v2);
\draw (v1) edge (v4);
\end{tikzpicture} \in \dLiebC{c,d}, \hspace{4 mm} \begin{tikzpicture}[baseline={([yshift=-0.5ex]current bounding box.center)},scale=0.35,transform shape]
\node (v0) [circle,draw,fill] at (0,0) {};
\node (v1) [circle,draw] at (-1,1) {\huge $1$};
\node (v2) [circle,draw] at (1,1) {\huge $2$};
\node (v3) [circle,draw] at (0,-1) {\huge $1$};
\draw (v0) edge (v1);
\draw (v0) edge (v2);
\draw (v0) edge (v3);
\node (v00) [circle,draw,fill] at (3,0) {};
\node (v01) [circle,draw] at (2,-1) {\huge $2$};
\node (v02) [circle,draw] at (4,-1) {\huge $3$};
\node (v03) [circle,draw] at (3,1) {\huge $3$};
\draw (v00) edge (v01);
\draw (v00) edge (v02);
\draw (v00) edge (v03);
\end{tikzpicture} \notin \dLiebC{c,d}.
\]

An element of $\dLiebC{c,d}$ consists of three types of vertices, the output and input vertices, which are at the top respectively at the bottom. We call the black vertices the \textit{internal vertices} and any edge between two internal vertices will be called an \textit{internal edge}. The graph induced by deleting the input and output vertices is in general not connected and each connected component will be called an \textit{irreducible connected component}.

For elements $\Gamma_1,\Gamma_2 \in \dLiebC{c,d}$ the properadic compositions $\Gamma_1 \, _{i_1,\cdots,i_k}\circ_{j_1,\cdots,j_k} \, \Gamma_2$ are defined as follows:
We first erase the  input vertices $i_1, \cdots i_k$ of $\Gamma_1$ and output vertices $j_1, \cdots, j_k$ of $\Gamma_2$. Then if the input vertex $i$ is paired with the output $j$ , we first sum over all possible attachments of the input vertices labelled $i$ and and output edges labelled $j$. Secondly we sum over all possible attachments of the remaining input edges labelled $i$ to the input vertices of $\Gamma_2$ and similarly for the remaining output edges labelled $j$.  

\begin{ex}
\begin{enumerate}[label=\alph*),before=\setlength{\baselineskip}{23mm}]
\item $\begin{tikzpicture}[baseline={([yshift=-0.5ex]current bounding box.center)},scale=0.35,transform shape]
\node (v2) [circle,draw] at (2,2) {\huge $1$};
\node (v3) [circle,draw,fill] at (2,1) {};
\node (v4) [circle,draw] at (1,0) {\huge $1$};
\node (v5) [circle,draw] at (3,0) {\huge $2$};
\draw (v2) edge (v3);
\draw (v3) edge (v4);
\draw (v3) edge (v5);
\end{tikzpicture} {_{1}\circ}_{2} \begin{tikzpicture}[baseline={([yshift=-0.5ex]current bounding box.center)},scale=0.35,transform shape]
\node (v0) [circle,draw] at (0,2) {\huge $1$};
\node (v1) [circle,draw] at (2,2) {\huge $2$};
\node (v2) [circle,draw,fill] at (1,1) {};
\node (v3) [circle,draw] at (1,0) {\huge $1$};
\draw (v0) edge (v2);
\draw (v1) edge (v2);
\draw (v2) edge (v3);
\end{tikzpicture}=\begin{tikzpicture}[baseline={([yshift=-0.5ex]current bounding box.center)},scale=0.35,transform shape]
\node (v0) [circle,draw] at (-1,2.5) {\huge $1$};
\node (v1) [circle,draw,fill] at (0,1) {};
\node (v2) [circle,draw] at (0,0) {\huge $1$};
\node (v3) [circle,draw,fill] at (1,1.5) {};
\node (v4) [circle,draw] at (1,3) {\huge $2$};
\node (v5) [circle,draw] at (2,0) {\huge $2$};
\draw (v0) edge (v1);
\draw (v2) edge (v1);
\draw (v3) edge (v1);
\draw (v3) edge (v4);
\draw (v3) edge (v5);
\end{tikzpicture}+\begin{tikzpicture}[baseline={([yshift=-0.5ex]current bounding box.center)},scale=0.35,transform shape]
\node (v0) [circle,draw] at (-1,2) {\huge $1$};
\node (v1) [circle,draw,fill] at (0,1) {};
\node (v2) [circle,draw] at (0,0) {\huge $1$};
\node (v3) [circle,draw,fill] at (1,1) {};
\node (v4) [circle,draw] at (1,2) {\huge $2$};
\node (v5) [circle,draw] at (2,0) {\huge $2$};
\draw (v0) edge (v1);
\draw (v2) edge (v1);
\draw (v3) edge (v4);
\draw (v3) edge (v5);
\draw (v3) edge (v2);
\draw (v1) edge (v4);
\end{tikzpicture}$.

\item $\begin{tikzpicture}[baseline={([yshift=-0.5ex]current bounding box.center)},scale=0.35,transform shape]
\node (v0) [circle,draw] at (0,2) {\huge $1$};
\node (v1) [circle,draw,fill] at (1,1) {};
\node (v2) [circle,draw] at (2,2) {\huge $2$};
\node (v3) [circle,draw] at (1,0) {\huge $1$};
\draw (v0) edge (v1);
\draw (v1) edge (v2);
\draw (v1) edge (v3);
\end{tikzpicture} {_{1}\circ_1} \begin{tikzpicture}[baseline={([yshift=-0.5ex]current bounding box.center)},scale=0.35,transform shape]
\node (v0) [circle,draw] at (0,0) {\huge $1$};
\node (v1) [circle,draw,fill] at (1,1) {};
\node (v2) [circle,draw] at (2,0) {\huge $2$};
\node (v3) [circle,draw] at (1,2) {\huge $1$};
\draw (v0) edge (v1);
\draw (v1) edge (v2);
\draw (v1) edge (v3);
\end{tikzpicture}=
\begin{tikzpicture}[baseline={([yshift=-0.5ex]current bounding box.center)},scale=0.35,transform shape]
\node (v0) [circle,draw] at (0,3) {\huge $1$};
\node (v1) [circle,draw] at (2,3) {\huge $2$};
\node (v2) [circle,draw,fill] at (1,2) {};
\node (v3) [circle,draw,fill] at (1,1) {};
\node (v4) [circle,draw] at (0,0) {\huge $1$};
\node (v5) [circle,draw] at (2,0) {\huge $2$};
\draw (v0) edge (v2);
\draw (v1) edge (v2);
\draw (v2) edge (v3);
\draw (v3) edge (v4);
\draw (v3) edge (v5);
\end{tikzpicture} +
\begin{tikzpicture}[baseline={([yshift=-0.5ex]current bounding box.center)},scale=0.35,transform shape]
\node (v0) [circle,draw] at (-1,2) {\huge $1$};
\node (v1) [circle,draw,fill] at (0,1) {};
\node (v2) [circle,draw] at (0,0) {\huge $1$};
\node (v3) [circle,draw,fill] at (1,1) {};
\node (v4) [circle,draw] at (1,2) {\huge $2$};
\node (v5) [circle,draw] at (2,0) {\huge $2$};
\draw (v0) edge (v1);
\draw (v2) edge (v1);
\draw (v3) edge (v0);
\draw (v3) edge (v5);
\draw (v3) edge (v2);
\draw (v1) edge (v4);
\end{tikzpicture} +
\begin{tikzpicture}[baseline={([yshift=-0.5ex]current bounding box.center)},scale=0.35,transform shape]
\node (v0) [circle,draw] at (-1,2) {\huge $1$};
\node (v1) [circle,draw,fill] at (0,1) {};
\node (v2) [circle,draw] at (0,0) {\huge $1$};
\node (v3) [circle,draw,fill] at (1,1) {};
\node (v4) [circle,draw] at (1,2) {\huge $2$};
\node (v5) [circle,draw] at (2,0) {\huge $2$};
\draw (v0) edge (v1);
\draw (v5) edge (v1);
\draw (v3) edge (v0);
\draw (v3) edge (v5);
\draw (v3) edge (v2);
\draw (v1) edge (v4);
\end{tikzpicture} +
\begin{tikzpicture}[baseline={([yshift=-0.5ex]current bounding box.center)},scale=0.35,transform shape]
\node (v0) [circle,draw] at (-1,2) {\huge $1$};
\node (v1) [circle,draw,fill] at (0,1) {};
\node (v2) [circle,draw] at (0,0) {\huge $1$};
\node (v3) [circle,draw,fill] at (1,1) {};
\node (v4) [circle,draw] at (1,2) {\huge $2$};
\node (v5) [circle,draw] at (2,0) {\huge $2$};
\draw (v0) edge (v1);
\draw (v2) edge (v1);
\draw (v3) edge (v0);
\draw (v3) edge (v5);
\draw (v3) edge (v2);
\draw (v1) edge (v4);
\end{tikzpicture} + 
\begin{tikzpicture}[baseline={([yshift=-0.5ex]current bounding box.center)},scale=0.35,transform shape]
\node (v0) [circle,draw] at (-1,2) {\huge $1$};
\node (v1) [circle,draw,fill] at (0,1) {};
\node (v2) [circle,draw] at (0,0) {\huge $1$};
\node (v3) [circle,draw,fill] at (1,1) {};
\node (v4) [circle,draw] at (1,2) {\huge $2$};
\node (v5) [circle,draw] at (2,0) {\huge $2$};
\draw (v0) edge (v1);
\draw (v5) edge (v1);
\draw (v3) edge (v4);
\draw (v3) edge (v5);
\draw (v3) edge (v2);
\draw (v1) edge (v4);
\end{tikzpicture}. $
\end{enumerate}
\end{ex}

We observe that the compositions in $\dLiebC{c,d}$ does not decrease the number of internal edges and thus we can consider the quotient properad $\dLiebC{c,d}/I$ where $I$ is the ideal generated by all elements with at least one internal edge. This quotient can be fully described using hairy graphs (see §\ref{section:entangled graphs}).

\begin{lemma}\label{lemma:morphism_lieb_dlieb}

There is a morphism of properads

\[
i:\lieb_{c,d} \ra \dLiebC{c,d}
\]

\noi given by

\begin{equation*}
\left\{ \begin{array}{cll}
\begin{tikzpicture}[baseline={([yshift=-0.5ex]current bounding box.center)},scale=\scaleValue,transform shape]
\node (v0) at (0,1.25) {\huge $1$};
\node (v1) [circle,draw,fill] at (0,0) {};
\node (v2) at (-1,-1) {\huge $1$};
\node (v3) at (1,-1) {\huge $2$};
\draw (v0) edge (v1);
\draw (v1) edge (v2);
\draw (v1) edge (v3); 
\end{tikzpicture} & \lma & \begin{tikzpicture}[baseline={([yshift=-0.5ex]current bounding box.center)},scale=\scaleValue,transform shape]
\node (v0) [circle,draw] at (0,1.25) {\huge $1$};
\node (v1) [circle,draw,fill] at (0,0) {};
\node (v2) [circle,draw] at (-1,-1) {\huge $1$};
\node (v3) [circle,draw] at (1,-1) {\huge $2$};
\draw (v0) edge (v1);
\draw (v1) edge (v2);
\draw (v1) edge (v3);
\end{tikzpicture} \\[0.8cm]
\begin{tikzpicture}[baseline={([yshift=-0.5ex]current bounding box.center)},scale=\scaleValue,transform shape]
\node (v0) at (0,-1.25) {\huge $1$};
\node (v1) [circle,draw,fill] at (0,0) {};
\node (v2) at (-1,1) {\huge $1$};
\node (v3) at (1,1) {\huge $2$};
\draw (v0) edge (v1);
\draw (v1) edge (v2);
\draw (v1) edge (v3);
\end{tikzpicture} & \lma &
\begin{tikzpicture}[baseline={([yshift=-0.5ex]current bounding box.center)},scale=\scaleValue,transform shape]
\node (v0) [circle,draw] at (0,-1.25) {\huge $1$};
\node (v1) [circle,draw,fill] at (0,0) {};
\node (v2) [circle,draw] at (-1,1) {\huge $1$};
\node (v3) [circle,draw] at (1,1) {\huge $2$};
\draw (v0) edge (v1);
\draw (v1) edge (v2);
\draw (v1) edge (v3);
\end{tikzpicture}.
\end{array} \right.
\end{equation*}

\end{lemma}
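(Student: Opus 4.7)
To prove that $i$ extends to a morphism of properads, it suffices to verify that the three defining relations of $\lieb_{c,d}$ (the Jacobi identity for $\lambda$, the co-Jacobi identity for $\delta$, and the Drinfeld compatibility relation) are respected by the images of the generators in $\dLiebC{c,d}$. The images $i(\lambda)$ and $i(\delta)$ are the minimal hairy graphs whose internal shape coincides with that of the generators and whose leaves are decorated by white input and output vertices each joined by a single edge; the $\mathbb{S}_2$-equivariance of the generators therefore passes to the images by inspection.

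For the Jacobi identity, expand each composition $i(\lambda) \circ i(\lambda)$ in $\dLiebC{c,d}$ using the properadic composition rule described above. Since each copy of $i(\lambda)$ has only single-edge white vertices, the composition produces exactly three graphs: one \emph{ladder} obtained by gluing the two dangling edges into an internal edge, and two \emph{polydifferential} graphs obtained by attaching the dangling input edge directly to one of the two remaining white input vertices of the inner bracket while the dangling output edge is attached to the unique remaining white output vertex of the outer bracket. Summing over the three cyclic permutations of the input labels $\{1,2,3\}$, the three ladder contributions reassemble into the classical Jacobi relation of $\lieb_{c,d}$ and vanish. The six polydifferential contributions split into three pairs, grouped by which input label carries doubled valence; within each pair the two graphs coincide as decorated graphs and carry opposite signs once the factors from the graded antisymmetry of $\lambda$ and from the properadic composition rule are collected. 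The co-Jacobi identity for $i(\delta)$ is obtained by the formally dual argument.

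For the Drinfeld compatibility, Example~(b) already computes $i(\delta) \circ_1 i(\lambda)$ as a sum of five graphs: one ladder (two dangling edges glued into an internal edge) plus four polydifferential graphs corresponding to the four ways of attaching the dangling input edge to one of the two remaining white input vertices of $i(\lambda)$ while attaching the dangling output edge to one of the two remaining white output vertices of $i(\delta)$. On the right-hand side of the Drinfeld relation sit four trees in $\lieb_{c,d}$, each of which, once composed in $\dLiebC{c,d}$, expands into a ladder plus a single polydifferential graph. Combined with the signs $-1$, $-(-1)^c$, $-(-1)^{c+d}$, $-(-1)^d$ dictated by the Drinfeld relation, the four ladder summands on the right assemble, via the original Drinfeld relation lifted to the ladder subgraphs of $\dLiebC{c,d}$, into the single ladder term on the left, while the four polydifferential summands on the right match bijectively against the four polydifferential graphs on the left.

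The principal obstacle is the Koszul sign bookkeeping: the signs arising from the graded (anti)symmetries of the generators, from the properadic composition rule, and from the attachment conventions in $\dLiebC{c,d}$ must be tracked consistently throughout. Once the graph-to-graph matching described above is pinned down, however, the sign verification reduces to a direct if tedious check.
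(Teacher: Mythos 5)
Your proposal is correct, and on the Drinfeld compatibility relation --- the only part of the lemma the paper verifies in detail --- it follows essentially the paper's route: expand the left-hand side as in Example (b) into the ladder plus four polydifferential graphs, expand each of the four signed trees on the right into a ladder plus a single polydifferential graph, let the ladders cancel via the compatibility relation of $\lieb_{c,d}$ lifted to the internal decorations, and match the four remaining polydifferential graphs pairwise. The genuine divergence is in the (co)Jacobi identities: the paper does not verify these at all but cites Lemma 3.2 of \cite{wolff2023} (the analogous statement for $\lie_d \ra \oFun(\lie_d)$), whereas you verify them directly. Your expansion is structurally sound --- each composition $i(\lambda)\circ i(\lambda)$ indeed yields exactly three terms (one ladder, two polydifferential, since the hanging output edge has a unique white output vertex available), the three ladders vanish because their decoration is the Jacobiator of $\lieb_{c,d}$, and the six polydifferential terms do pair off according to which input vertex acquires valency two. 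What your route buys is self-containedness; what it costs is that the crux of the pairwise cancellation --- the opposite signs --- is asserted rather than computed, and this is precisely the content of the cited lemma. Note that for the compatibility relation the paper does not leave the analogous signs to the reader either: it records the three explicit white-vertex symmetry identities, with factors $(-1)^d$, $(-1)^{c+d}$ and $(-1)^c$ arising from the $\mathbb{S}_2$-(skew)symmetry of the generators encoded in $E(2,1)$ and $E(1,2)$, which effect the bijective matching you describe. To bring your proof to the same standard you should either carry out the corresponding sign identities for the Jacobi pairs (swapping the roles of the two black vertices attached to the doubled input vertex) or, as the paper does, invoke the Lie-case lemma, under which your Jacobi computation is subsumed.
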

\begin{proof}

The proof of the Jacobi and co-Jacobi identity follows from Lemma 3.2 in \cite{wolff2023}. It remains to show that the  compatibility condition is satisfied in $\dLiebC{c,d}$:

\ba{rcl}
\begin{tikzpicture}[baseline={([yshift=-0.5ex]current bounding box.center)},scale=0.35,transform shape]
\node (v0) [circle,draw] at (0,3) {\huge $1$};
\node (v1) [circle,draw] at (2,3) {\huge $2$};
\node (v2) [circle,draw,fill] at (1,2) {};
\node (v3) [circle,draw,fill] at (1,1) {};
\node (v4) [circle,draw] at (0,0) {\huge $1$};
\node (v5) [circle,draw] at (2,0) {\huge $2$};
\draw (v0) edge (v2);
\draw (v1) edge (v2);
\draw (v2) edge (v3);
\draw (v3) edge (v4);
\draw (v3) edge (v5);
\end{tikzpicture} &+&
\begin{tikzpicture}[baseline={([yshift=-0.5ex]current bounding box.center)},scale=0.35,transform shape]
\node (v0) [circle,draw] at (-1,2) {\huge $1$};
\node (v1) [circle,draw,fill] at (0,1) {};
\node (v2) [circle,draw] at (0,0) {\huge $1$};
\node (v3) [circle,draw,fill] at (1,1) {};
\node (v4) [circle,draw] at (1,2) {\huge $2$};
\node (v5) [circle,draw] at (2,0) {\huge $2$};
\draw (v0) edge (v1);
\draw (v2) edge (v1);
\draw (v3) edge (v0);
\draw (v3) edge (v5);
\draw (v3) edge (v2);
\draw (v1) edge (v4);
\end{tikzpicture} +
\begin{tikzpicture}[baseline={([yshift=-0.5ex]current bounding box.center)},scale=0.35,transform shape]
\node (v0) [circle,draw] at (-1,2) {\huge $1$};
\node (v1) [circle,draw,fill] at (0,1) {};
\node (v2) [circle,draw] at (0,0) {\huge $1$};
\node (v3) [circle,draw,fill] at (1,1) {};
\node (v4) [circle,draw] at (1,2) {\huge $2$};
\node (v5) [circle,draw] at (2,0) {\huge $2$};
\draw (v0) edge (v1);
\draw (v5) edge (v1);
\draw (v3) edge (v0);
\draw (v3) edge (v5);
\draw (v3) edge (v2);
\draw (v1) edge (v4);
\end{tikzpicture} +
\begin{tikzpicture}[baseline={([yshift=-0.5ex]current bounding box.center)},scale=0.35,transform shape]
\node (v0) [circle,draw] at (-1,2) {\huge $1$};
\node (v1) [circle,draw,fill] at (0,1) {};
\node (v2) [circle,draw] at (0,0) {\huge $1$};
\node (v3) [circle,draw,fill] at (1,1) {};
\node (v4) [circle,draw] at (1,2) {\huge $2$};
\node (v5) [circle,draw] at (2,0) {\huge $2$};
\draw (v0) edge (v1);
\draw (v2) edge (v1);
\draw (v3) edge (v4);
\draw (v3) edge (v5);
\draw (v3) edge (v2);
\draw (v1) edge (v4);
\end{tikzpicture} + 
\begin{tikzpicture}[baseline={([yshift=-0.5ex]current bounding box.center)},scale=0.35,transform shape]
\node (v0) [circle,draw] at (-1,2) {\huge $1$};
\node (v1) [circle,draw,fill] at (0,1) {};
\node (v2) [circle,draw] at (0,0) {\huge $1$};
\node (v3) [circle,draw,fill] at (1,1) {};
\node (v4) [circle,draw] at (1,2) {\huge $2$};
\node (v5) [circle,draw] at (2,0) {\huge $2$};
\draw (v0) edge (v1);
\draw (v5) edge (v1);
\draw (v3) edge (v4);
\draw (v3) edge (v5);
\draw (v3) edge (v2);
\draw (v1) edge (v4);
\end{tikzpicture} \\ 
& - & \left( \begin{tikzpicture}[baseline={([yshift=-0.5ex]current bounding box.center)},scale=0.35,transform shape]
\node (v0) [circle,draw] at (-1,2.5) {\huge $1$};
\node (v1) [circle,draw,fill] at (0,1) {};
\node (v2) [circle,draw] at (0,0) {\huge $1$};
\node (v3) [circle,draw,fill] at (1,1.5) {};
\node (v4) [circle,draw] at (1,3) {\huge $2$};
\node (v5) [circle,draw] at (2,0) {\huge $2$};
\draw (v0) edge (v1);
\draw (v2) edge (v1);
\draw (v3) edge (v1);
\draw (v3) edge (v4);
\draw (v3) edge (v5);
\end{tikzpicture}+\begin{tikzpicture}[baseline={([yshift=-0.5ex]current bounding box.center)},scale=0.35,transform shape]
\node (v0) [circle,draw] at (-1,2) {\huge $1$};
\node (v1) [circle,draw,fill] at (0,1) {};
\node (v2) [circle,draw] at (0,0) {\huge $1$};
\node (v3) [circle,draw,fill] at (1,1) {};
\node (v4) [circle,draw] at (1,2) {\huge $2$};
\node (v5) [circle,draw] at (2,0) {\huge $2$};
\draw (v0) edge (v1);
\draw (v2) edge (v1);
\draw (v3) edge (v4);
\draw (v3) edge (v5);
\draw (v3) edge (v2);
\draw (v1) edge (v4);
\end{tikzpicture} \right)  \\[10mm] 
& -(-1)^c & \left( \begin{tikzpicture}[baseline={([yshift=-0.5ex]current bounding box.center)},scale=0.35,transform shape]
\node (v0) [circle,draw] at (-1,2.5) {\huge $1$};
\node (v1) [circle,draw,fill] at (0,1) {};
\node (v2) [circle,draw] at (0,0) {\huge $2$};
\node (v3) [circle,draw,fill] at (1,1.5) {};
\node (v4) [circle,draw] at (1,3) {\huge $2$};
\node (v5) [circle,draw] at (2,0) {\huge $1$};
\draw (v0) edge (v1);
\draw (v2) edge (v1);
\draw (v3) edge (v1);
\draw (v3) edge (v4);
\draw (v3) edge (v5);
\end{tikzpicture}+\begin{tikzpicture}[baseline={([yshift=-0.5ex]current bounding box.center)},scale=0.35,transform shape]
\node (v0) [circle,draw] at (-1,2) {\huge $1$};
\node (v1) [circle,draw,fill] at (0,1) {};
\node (v2) [circle,draw] at (0,0) {\huge $2$};
\node (v3) [circle,draw,fill] at (1,1) {};
\node (v4) [circle,draw] at (1,2) {\huge $2$};
\node (v5) [circle,draw] at (2,0) {\huge $1$};
\draw (v0) edge (v1);
\draw (v2) edge (v1);
\draw (v3) edge (v4);
\draw (v3) edge (v5);
\draw (v3) edge (v2);
\draw (v1) edge (v4);
\end{tikzpicture} \right) \\[10mm] 
& -(-1)^{c+d} & \left( \begin{tikzpicture}[baseline={([yshift=-0.5ex]current bounding box.center)},scale=0.35,transform shape]
\node (v0) [circle,draw] at (-1,2.5) {\huge $2$};
\node (v1) [circle,draw,fill] at (0,1) {};
\node (v2) [circle,draw] at (0,0) {\huge $2$};
\node (v3) [circle,draw,fill] at (1,1.5) {};
\node (v4) [circle,draw] at (1,3) {\huge $1$};
\node (v5) [circle,draw] at (2,0) {\huge $1$};
\draw (v0) edge (v1);
\draw (v2) edge (v1);
\draw (v3) edge (v1);
\draw (v3) edge (v4);
\draw (v3) edge (v5);
\end{tikzpicture}+\begin{tikzpicture}[baseline={([yshift=-0.5ex]current bounding box.center)},scale=0.35,transform shape]
\node (v0) [circle,draw] at (-1,2) {\huge $2$};
\node (v1) [circle,draw,fill] at (0,1) {};
\node (v2) [circle,draw] at (0,0) {\huge $2$};
\node (v3) [circle,draw,fill] at (1,1) {};
\node (v4) [circle,draw] at (1,2) {\huge $1$};
\node (v5) [circle,draw] at (2,0) {\huge $1$};
\draw (v0) edge (v1);
\draw (v2) edge (v1);
\draw (v3) edge (v4);
\draw (v3) edge (v5);
\draw (v3) edge (v2);
\draw (v1) edge (v4);
\end{tikzpicture} \right) \\[10mm] 
& -(-1)^d & \left( \begin{tikzpicture}[baseline={([yshift=-0.5ex]current bounding box.center)},scale=0.35,transform shape]
\node (v0) [circle,draw] at (-1,2.5) {\huge $1$};
\node (v1) [circle,draw,fill] at (0,1) {};
\node (v2) [circle,draw] at (0,0) {\huge $1$};
\node (v3) [circle,draw,fill] at (1,1.5) {};
\node (v4) [circle,draw] at (1,3) {\huge $2$};
\node (v5) [circle,draw] at (2,0) {\huge $2$};
\draw (v0) edge (v1);
\draw (v2) edge (v1);
\draw (v3) edge (v1);
\draw (v3) edge (v4);
\draw (v3) edge (v5);
\end{tikzpicture}+\begin{tikzpicture}[baseline={([yshift=-0.5ex]current bounding box.center)},scale=0.35,transform shape]
\node (v0) [circle,draw] at (-1,2) {\huge $2$};
\node (v1) [circle,draw,fill] at (0,1) {};
\node (v2) [circle,draw] at (0,0) {\huge $1$};
\node (v3) [circle,draw,fill] at (1,1) {};
\node (v4) [circle,draw] at (1,2) {\huge $1$};
\node (v5) [circle,draw] at (2,0) {\huge $2$};
\draw (v0) edge (v1);
\draw (v2) edge (v1);
\draw (v3) edge (v4);
\draw (v3) edge (v5);
\draw (v3) edge (v2);
\draw (v1) edge (v4);
\end{tikzpicture} \right).
\ea

This now follows from the observations 

\vspace{3mm}

\begin{itemize}
\setlength{\itemsep}{15pt}
    \item $\begin{tikzpicture}[baseline={([yshift=-0.5ex]current bounding box.center)},scale=0.35,transform shape]
\node (v0) [circle,draw] at (-1,2) {\huge $1$};
\node (v1) [circle,draw,fill] at (0,1) {};
\node (v2) [circle,draw] at (0,0) {\huge $1$};
\node (v3) [circle,draw,fill] at (1,1) {};
\node (v4) [circle,draw] at (1,2) {\huge $2$};
\node (v5) [circle,draw] at (2,0) {\huge $2$};
\draw (v0) edge (v1);
\draw (v2) edge (v1);
\draw (v3) edge (v0);
\draw (v3) edge (v5);
\draw (v3) edge (v2);
\draw (v1) edge (v4);
\end{tikzpicture} = (-1)^d\begin{tikzpicture}[baseline={([yshift=-0.5ex]current bounding box.center)},scale=0.35,transform shape]
\node (v0) [circle,draw] at (-1,2) {\huge $2$};
\node (v1) [circle,draw,fill] at (0,1) {};
\node (v2) [circle,draw] at (0,0) {\huge $1$};
\node (v3) [circle,draw,fill] at (1,1) {};
\node (v4) [circle,draw] at (1,2) {\huge $1$};
\node (v5) [circle,draw] at (2,0) {\huge $2$};
\draw (v0) edge (v1);
\draw (v2) edge (v1);
\draw (v3) edge (v4);
\draw (v3) edge (v5);
\draw (v3) edge (v2);
\draw (v1) edge (v4);
\end{tikzpicture}$
    \item $\begin{tikzpicture}[baseline={([yshift=-0.5ex]current bounding box.center)},scale=0.35,transform shape]
\node (v0) [circle,draw] at (-1,2) {\huge $1$};
\node (v1) [circle,draw,fill] at (0,1) {};
\node (v2) [circle,draw] at (0,0) {\huge $1$};
\node (v3) [circle,draw,fill] at (1,1) {};
\node (v4) [circle,draw] at (1,2) {\huge $2$};
\node (v5) [circle,draw] at (2,0) {\huge $2$};
\draw (v0) edge (v1);
\draw (v5) edge (v1);
\draw (v3) edge (v0);
\draw (v3) edge (v5);
\draw (v3) edge (v2);
\draw (v1) edge (v4);
\end{tikzpicture} =(-1)^{c+d}\begin{tikzpicture}[baseline={([yshift=-0.5ex]current bounding box.center)},scale=0.35,transform shape]
\node (v0) [circle,draw] at (-1,2) {\huge $2$};
\node (v1) [circle,draw,fill] at (0,1) {};
\node (v2) [circle,draw] at (0,0) {\huge $2$};
\node (v3) [circle,draw,fill] at (1,1) {};
\node (v4) [circle,draw] at (1,2) {\huge $1$};
\node (v5) [circle,draw] at (2,0) {\huge $1$};
\draw (v0) edge (v1);
\draw (v2) edge (v1);
\draw (v3) edge (v4);
\draw (v3) edge (v5);
\draw (v3) edge (v2);
\draw (v1) edge (v4);
\end{tikzpicture}$

\item $\begin{tikzpicture}[baseline={([yshift=-0.5ex]current bounding box.center)},scale=0.35,transform shape]
\node (v0) [circle,draw] at (-1,2) {\huge $1$};
\node (v1) [circle,draw,fill] at (0,1) {};
\node (v2) [circle,draw] at (0,0) {\huge $1$};
\node (v3) [circle,draw,fill] at (1,1) {};
\node (v4) [circle,draw] at (1,2) {\huge $2$};
\node (v5) [circle,draw] at (2,0) {\huge $2$};
\draw (v0) edge (v1);
\draw (v5) edge (v1);
\draw (v3) edge (v4);
\draw (v3) edge (v5);
\draw (v3) edge (v2);
\draw (v1) edge (v4);
\end{tikzpicture}=(-1)^c\begin{tikzpicture}[baseline={([yshift=-0.5ex]current bounding box.center)},scale=0.35,transform shape]
\node (v0) [circle,draw] at (-1,2) {\huge $1$};
\node (v1) [circle,draw,fill] at (0,1) {};
\node (v2) [circle,draw] at (0,0) {\huge $2$};
\node (v3) [circle,draw,fill] at (1,1) {};
\node (v4) [circle,draw] at (1,2) {\huge $2$};
\node (v5) [circle,draw] at (2,0) {\huge $1$};
\draw (v0) edge (v1);
\draw (v2) edge (v1);
\draw (v3) edge (v4);
\draw (v3) edge (v5);
\draw (v3) edge (v2);
\draw (v1) edge (v4);
\end{tikzpicture}$.
    
\end{itemize}

\end{proof}

The aim of this paper is to study the deformation complex of the above morphism.

\end{section}

\begin{section}{Complex of entangled graphs}
\label{section:entangled graphs}

\begin{defi}
   We define the \textit{properad of entangled graphs} $\gra_{c,d}$ as the properad whose elements are pairs of  graphs $(\Gamma_1,\Gamma_2)$ where $\Gamma_1$ has an orientation induced for even $c$ by an ordering of the undirected edges $e_1 \wedge \cdots \wedge e_{\#E(\Gamma_1)}$  up to an even permutation, while an odd permutation gives a mulitplication by $-1$.  For $c$ odd we assume that edges are directed up to a flip and multiplication by $-1$. Similarly for $\Gamma_2$. In addition we assume that $\Gamma_1$ has hairs labelled by the edges of $\Gamma_2$ and vice-versa.
   Given two pairs $(\Gamma_1,\Gamma_2)$ and $(\gamma_1,\gamma_2)$ and given vertices $i_1,\cdots,i_k \in V(\Gamma_1)$ and $j_1,\cdots,j_k \in V(\gamma_2)$ we define the properadic composition $(\Gamma_1,\Gamma_2) \, _{i_1,\cdots,i_k}\circ_{j_1,\cdots,j_k} \, (\gamma_1,\gamma_2)$ is defined to be the pair $(G_1,G_2)$ where $G_1$ is the graph obtained by erasing the vertices $i_1,\cdots,i_k$ of $\Gamma_1$ and summing over all possible attachments of the halfedges to the vertices of $\gamma_1$. Similarly $G_2$ is obtained by erasing the vertices $j_1,\cdots,j_k$ of $\gamma_2$ and summing over all possible attachments of the halfedges to the vertices of $\Gamma_2$.
\end{defi}

\begin{ex}

The properadic composition 

\[
\left(\icg{example_composition_1},\includegraphics[scale=0.3,valign=t]{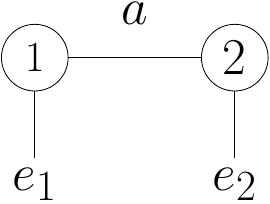}\right)   \, {_{1,2}\circ_{1,2}} \, \left(\includegraphics[scale=0.3,valign=t]{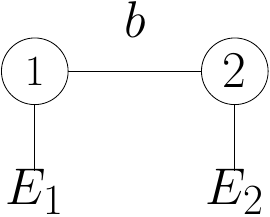},\icg{example_composition_3}\right)=\sum_{1 \leq i,j \leq 4}(\Gamma'_i,\Gamma_j'')
\]

with

\[
\Gamma'_1=\icg{compositions1_1},\hspace{2mm}\Gamma'_2=\icg{compositions1_2}, \hspace{2mm}\Gamma'_3=\icg{compositions1_3},\hspace{2mm} \Gamma'_4=\icg{compositions1_4},
\]

and 

\[
\Gamma''_1=\icg{compositions2_1},\hspace{2mm}\Gamma''_2=\icg{compositions2_2}, \hspace{2mm}\Gamma''_3=\icg{compositions2_3},\hspace{2mm} \Gamma''_4=\icg{compositions2_4}.
\]

\end{ex}

\begin{lemma}

Let $I \subset \dLiebC{c,d}$ be the ideal generated by the elements with at least one internal edge. There is an explicit isomorphism of properads

\[
\dLiebC{c,d}/I  \overset{F}{\lra} \gra_{c,d}.
\]

\end{lemma}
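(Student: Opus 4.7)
The first task is to describe the quotient $\dLiebC{c,d}/I$ concretely. Since the properadic composition in $\dLiebC{c,d}$ never decreases the number of internal edges (as noted immediately before the lemma), the ideal $I$ is spanned by the elements containing at least one internal edge. Thus elements of the quotient are represented by graphs in which every black vertex -- necessarily a generator of $\lieb_{c,d}$ of arity $(2,1)$ or $(1,2)$ -- has all of its hairs attached to white input/output vertices, and we may work with such representatives throughout.

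On such a representative $\Gamma$ the map $F(\Gamma) = (\Gamma_1,\Gamma_2)$ is defined as follows. The vertex set of $\Gamma_1$ is the set of input white vertices of $\Gamma$ and the vertex set of $\Gamma_2$ is the set of output white vertices of $\Gamma$. Each $(2,1)$-type black vertex $b$ contributes an edge in $\Gamma_1$ joining the two input whites at which $b$ is attached (a loop if they coincide), together with a hair in $\Gamma_2$ at the output white of $b$, the hair being labelled by that new edge. Symmetrically, each $(1,2)$-type black vertex contributes an edge in $\Gamma_2$ and a labelled hair in $\Gamma_1$. The orientation data on $\Gamma_1$ (an ordering of edges for $c$ even; a direction up to flip with sign for $c$ odd) is read off from the order and orientation of the $(2,1)$-vertices of $\Gamma$, matched to the degree shifts in $E(2,1)$; symmetrically for $\Gamma_2$ and the $(1,2)$-vertices, using $d$.

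The inverse $F^{-1}$ is the obvious glueing: for an entangled pair $(\Gamma_1,\Gamma_2)$, each edge of $\Gamma_1$ produces a $(2,1)$-type black vertex whose two input hairs land on its endpoints in $\Gamma_1$ and whose output hair lands on the vertex of $\Gamma_2$ carrying the matching labelled hair; symmetrically each edge of $\Gamma_2$ produces a $(1,2)$-black vertex. The resulting graph has no internal edges and clearly inverts $F$, so $F$ is a bijection of $\mathbb{S}$-bimodules once the orientation/sign correspondence just described is checked on the two types of generators.

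The main and most delicate step is compatibility of $F$ with the properadic compositions. In $\dLiebC{c,d}$ the composition $\Gamma \,{_{i_1,\cdots,i_k}\circ_{j_1,\cdots,j_k}}\, \gamma$ sums over three kinds of reattachments of the hanging half-edges produced by erasing the input whites $i_\ell$ of $\Gamma$ and the output whites $j_\ell$ of $\gamma$; however, any attachment that pairs a hanging input half-edge of $\Gamma$ to a hanging output half-edge of $\gamma$ creates an internal black--black edge and hence lies in $I$. What survives in the quotient is precisely the sum over attaching the hanging input half-edges of $\Gamma$ to input white vertices of $\gamma$, together with attaching the hanging output half-edges of $\gamma$ to output white vertices of $\Gamma$. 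Under $F$ this is exactly the sum over attachments of the half-edges obtained by erasing $i_1,\cdots,i_k$ from $\Gamma_1$ to the vertices of $\gamma_1$, and symmetrically for the erasure of $j_1,\cdots,j_k$ from $\gamma_2$ with attachments to $\Gamma_2$ -- i.e.\ the definition of the composition in $\gra_{c,d}$. The hard part will be Koszul sign bookkeeping: one has to check that the signs appearing in the $\dLiebC{c,d}$ composition (coming from reordering generators of $\lieb_{c,d}$ and from the degree shifts) coincide with the edge-ordering/edge-direction signs in $\gra_{c,d}$, which I expect to follow by reducing to the two elementary cases of a single $(2,1)$- and single $(1,2)$-vertex being reattached.
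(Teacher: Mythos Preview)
Your proposal is correct and follows essentially the same approach as the paper's proof: define $F$ on generators by turning input/output white vertices into the vertex sets of $\Gamma_1$ and $\Gamma_2$, turn each bracket/cobracket black vertex into an edge in one graph together with a labelled hair in the other, and then check that the properadic compositions match after killing the terms that produce internal edges. The paper's argument is terser (it simply observes that the composition rule in $\gra_{c,d}$ was designed to make $F$ a morphism), whereas you spell out the inverse and the composition check in more detail; but the content is the same.

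One small notational slip: in the paper's convention $(m,n)$ denotes $m$ outputs and $n$ inputs, so the bracket with two inputs is of type $(1,2)$ and contributes an edge in $\Gamma_1$, while the cobracket of type $(2,1)$ contributes an edge in $\Gamma_2$. You have these two arities swapped, but the underlying picture is right.
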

\begin{proof}

For a generator $G$

\[
\begin{tikzpicture}[baseline={([yshift=-0.5ex]current bounding box.center)},scale=0.35,transform shape]
\node (v0) [circle,draw] at (0,0) {\huge $1$};
\node (v1) [circle,draw] at (3,0) {\huge $n$};
\node (v2) [circle,draw] at (0,-3) {\huge $1$};
\node (v3) [circle,draw] at (3,-3) {\huge $m$};
\node (v4) at (1.5,0) {\huge $\cdots$};
\node (v5) at (1.5,-3) {\huge $\cdots$};
\node (v6) [circle,draw,fill] at (0.75,-1.5) {};
\draw (v0) edge (v6);
\draw (v6) edge (v4);
\draw (v6) edge (v2);
\node (v7) [circle,draw,fill] at (2.25,-1.5) {};
\draw (v2) edge (v7);
\draw (v7) edge (v3);
\draw (v7) edge (v1);
\end{tikzpicture}
\]

we associate a pair of graphs $(\Gamma_1,\Gamma_2)$ as follows: the vertices of $\Gamma_1$ respectively the vertices of $\Gamma_2$ correspond to the input respectively output vertices of $G$. There is an edge between two vertices of $\Gamma_1$ if the corresponding input vertices are connected by some tree of the form

\[
\begin{tikzpicture}[baseline={([yshift=-0.5ex]current bounding box.center)},scale=0.35,transform shape]
\node (V0) at (0,1.25)  {};
\node (v1) [circle,draw,fill] at (0,0) {};
\node (v2) at (-1,-1)  {};
\node (v3) at (1,-1) {};
\draw (V0) edge (v1);
\draw (v1) edge (v2);
\draw (v1) edge (v3);
\end{tikzpicture}.
\]

In addition each such tree points to exactly one output vertex of $\Gamma_2$ to whom we add an hair labelled by that edge.

Similarly the edges of $\Gamma_2$ (and hairs of $\Gamma_1$) are given by trees of the form

\[
\begin{tikzpicture}[baseline={([yshift=-0.5ex]current bounding box.center)},scale=0.35,transform shape]
\node (v0) at (0,-1.25) {};
\node (v1) [circle,draw,fill] at (0,0) {};
\node (v2) at (-1,1) {};
\node (v3) at (1,1) {};
\draw (v0) edge (v1);
\draw (v1) edge (v2);
\draw (v1) edge (v3); 
\end{tikzpicture}.
\]

The properadic compositions in $\gra_{c,d}$ are defined to make this map a morphism of properads. More precisely given $F(G_1)=(\Gamma_1,\Gamma_2)$ and given $F(G_2)=(\gamma_1,\gamma_2)$ we see that in the properadic composition $G_1$  $ _{i_1,\cdots,i_k}\circ_{j_1,\cdots,j_k} G_2$ deleting the input $i$ of $G_1$ and summing over all attachments to the input vertices of $G_2$ corresponds to deleting the vertex $i$ of $\Gamma_1$ and summing over all attachments of the halfedges of $i$ to the vertices of $\gamma_1$.

\end{proof}

Using the maps

\[
\lieb_{c,d} \ra \dLiebC{c,d} \ra \dLiebC{c,d}/I \ra \gra_{c,d}
\]

we define the \textit{complex of entangled graphs} by the deformation complex $\fcgc_{c,d}:=\deform(\lieb_{c,d} \ra \gra_{c,d} )$ of the above composition. Its elements are given by pairs of graphs $(\Gamma_1,\Gamma_2)$ with (skew)symmetrized vertices depending on the parity of $c$ respectively $d$. The differential $\delta$ is given by

\[
\delta(\Gamma_1,\Gamma_2)=\delta'(\Gamma_1,\Gamma_2) \pm \delta''(\Gamma_1,\Gamma_2),
\]

where

\[
\delta'(\Gamma_1,\Gamma_2)=\sum_{\stackrel{v\in V(\Gamma_1)}{w \in V(\Gamma_2)}}{(\begin{tikzpicture}[baseline={([yshift=-0.5ex]current bounding box.center)},scale=0.35,transform shape]
\node (v1) [circle,draw] at (0,0.5) {};
\node (v2) [circle,draw] at (0,-1) {\huge $\Gamma$};
\draw (v1) edge (v2);
\end{tikzpicture},\Gamma_2^w)}
\, \pm \, 
\sum_{\stackrel{v \in V(\Gamma_1)}{w\in V(\Gamma_2)}}{(\Gamma_1\circ_v \begin{tikzpicture}[baseline={([yshift=-0.5ex]current bounding box.center)},scale=0.35,transform shape]
\node (v1) at (0,0) [circle,draw] {};
\node (v2) at (1,0) [circle,draw] {};
\draw (v1) edge (v2);
\end{tikzpicture},\Gamma_2^w)},
\]

i.e. the first sum is given by attaching a univalent vertex with no hairs to any vertex of $\Gamma_1$, while the term $\Gamma_1 \circ_v \begin{tikzpicture}[baseline={([yshift=-0.5ex]current bounding box.center)},scale=0.35,transform shape]
\node (v1) at (0,0) [circle,draw] {};
\node (v2) at (1,0) [circle,draw] {};
\draw (v1) edge (v2);
\end{tikzpicture} $ is given by splitting the vertex $v$  and summing over all possible attachments of its halfedges (including hairs) to the newly created vertices. For a vertex $w$ of $\Gamma_2$ the graph $\Gamma_2^w$ is obtained by attaching an hair labelled by the new edge in $\Gamma_1$ to the vertex $w$. Similarly $\delta''(\Gamma_1,\Gamma_2)$ is given by switching the roles of $\Gamma_1$ and $\Gamma_2$.

\begin{rem}

If a vertex $v \in V(\Gamma_1)$ is of valency at least one then the terms adding a univalent vertex with no hairs are canceled with the similar terms created by splitting $v$. Note however that univalent vertices with hairs can be created by the differential. 

\end{rem}

The cohomology of $\fcgc_{c,d}$ is surprisingly simple as shown by the following theorem:

\begin{thm} \label{thm:cohomology_fcgc}
    The cohomology of $\fcgc_{c,d}$ is two-dimensional and generated by $(\begin{tikzpicture}[baseline={([yshift=-0.5ex]current bounding box.center)},scale=0.35,transform shape]
    \node (v0) [circle,draw] at (0,0) {};
    \node (v1) at (0,1) {};
    \draw (v0) edge (v1);
    \node at (0,-1) {};
    \end{tikzpicture},\begin{tikzpicture}[baseline={([yshift=-0.5ex]current bounding box.center)},scale=0.35,transform shape]
    \node (v0) [circle,draw] at (0,0) {};
    \node (v1) [circle,draw] at (1,0) {};
    \draw (v0) edge (v1);
    \end{tikzpicture})$ and $(\begin{tikzpicture}[baseline={([yshift=-0.5ex]current bounding box.center)},scale=0.35,transform shape]
    \node (v0) [circle,draw] at (0,0) {};
    \node (v1) [circle,draw] at (1,0) {};
    \draw (v0) edge (v1);
    \end{tikzpicture},\begin{tikzpicture}[baseline={([yshift=-0.5ex]current bounding box.center)},scale=0.35,transform shape]
    \node (v0) [circle,draw] at (0,0) {};
    \node (v1) at (0,1) {};
    \draw (v0) edge (v1);
    \node at (0,-1) {};
    \end{tikzpicture})$.
    
\end{thm}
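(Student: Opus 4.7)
The plan is to exploit the bicomplex-like structure of the differential $\delta = \delta' \pm \delta''$ via a spectral sequence, eventually reducing the computation to well-understood graph complexes. First, one checks that the two proposed classes are cocycles; this is immediate from the Remark above, since both generators consist only of vertices of valence one, so every ``add a univalent vertex with no hairs'' contribution is cancelled against a corresponding splitting of a valence-$1$ vertex, on both the $\Gamma_1$- and $\Gamma_2$-sides. Moreover, these classes cannot be coboundaries for degree/size reasons.

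For the upper bound I would filter $\fcgc_{c,d}$ by the invariant $|V(\Gamma_2)|$. Since $\delta'$ only modifies $\Gamma_1$ (adding a new vertex and edge by splitting or univalent attachment) while adding just a new hair to $\Gamma_2$, it preserves $|V(\Gamma_2)|$; by contrast $\delta''$ strictly increases it. Consequently the associated graded differential on $E_0$ is exactly $\delta'$. The $E_0$-page then further decomposes by fixing the underlying graph of $\Gamma_2$ (vertices, edges, and hair attachments). For each such choice, the resulting complex on $(\Gamma_1, \mu)$, where $\mu$ records for each edge of $\Gamma_1$ the vertex of $\Gamma_2$ at which the corresponding hair sits, looks like a $V(\Gamma_2)$-\emph{colored} graph complex on $\Gamma_1$ in which the differential adjoins a new edge coloured by the formal sum $\sum_{w \in V(\Gamma_2)} w$.

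To compute the cohomology of this colored complex, I would decompose the color space $\mathbb{K}\langle V(\Gamma_2)\rangle$ as the direct sum of the line spanned by $\sum_w w$ and its ``traceless'' complement. On the subcomplex where every edge is diagonally coloured, the complex collapses to the standard Kontsevich graph complex on $\Gamma_1$, whose cohomology is the known $H(\GC_c)$ from \cite{MR3348138}. The traceless part should be acyclic, because the differential can only introduce diagonally coloured edges and never produces a traceless contribution; this is made precise by an explicit contracting homotopy that removes a traceless-coloured edge. Combined with an analogous analysis of the $\Gamma_1$-side hairs labelled by edges of $\Gamma_2$, this identifies the $E_1$-page with a much smaller complex where $\Gamma_1$ is restricted to minimal shapes.

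Running the remaining differentials of the spectral sequence, now involving $\delta''$, symmetrically cuts down the $\Gamma_2$-side. After convergence only the two generators in the Theorem survive, yielding a two-dimensional cohomology. \textbf{The main obstacle} is the analysis of the decorated graph complex on $E_0$, and in particular the acyclicity of its traceless part: the differential couples the topology of $\Gamma_1$ with the color assignments and with the hairs on $\Gamma_1$ dual to the edges of $\Gamma_2$, so writing down the homotopy compatibly with these constraints requires some care. Tracking the higher differentials of the main spectral sequence and confirming that they eliminate everything except the two named classes is the second delicate point.
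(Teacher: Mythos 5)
Your opening move coincides with the paper's: the paper also begins by filtering on the number of vertices of $\Gamma_2$, and your identification of the associated graded differential as $\delta'$ (split vertices of $\Gamma_1$, attach the new hair diagonally, i.e.\ summed over all $w\in V(\Gamma_2)$) is correct, as is the verification that the two named pairs are cocycles and not coboundaries. But from there the proposal has a genuine gap, in two places. First, the ``diagonal'' subcomplex of your coloured complex is \emph{not} the Kontsevich graph complex: the hairs on $\Gamma_1$ labelled by the edges of $\Gamma_2$ are still present and are redistributed by the splitting differential, so what you obtain is a graph complex with labelled external hairs, whose cohomology is far larger than $H(\GC_c)$. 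Hence the $E_1$-page is not ``minimal shapes'' but enormous, and since the theorem asserts the final answer is only two-dimensional, essentially all of the content of the theorem is deferred to the higher differentials, which you do not control; the sentence ``after convergence only the two generators survive'' is exactly the statement to be proved. Second, your acyclicity argument for the traceless part is not an argument: the observation that the differential only creates diagonally coloured edges shows that the complex \emph{splits} according to the number of traceless edges; it does not show that the summands with traceless edges are acyclic. Moreover, the proposed ``explicit contracting homotopy that removes a traceless-coloured edge'' cannot exist in the naive sense: removing or contracting an edge is (up to duality) adjoint to the splitting differential itself, and a relation $h\delta+\delta h=\mathrm{id}$ of this kind would make ordinary graph complexes with marked edges acyclic, which they are not.

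For comparison, the paper never attempts to compute the $\delta'$-cohomology in closed form. It splits off the small explicit subcomplex $\fcgc^0_{c,d}$ containing the two classes, and proves the quotient acyclic by a cascade of further filtrations: ordering the vertices and edges of $\Gamma_2$, filtering so that the new hair lands only on the minimal vertex $w_{\min}$, then filtering by the number of non-special vertices, until the induced differential only grows a ``minimal antenna'' of bivalent vertices, where a parity argument (injective in odd length, zero in even length, with the roles reversed on the complementary summand) gives acyclicity. The mechanism driving the result is thus the univalent vertices and antennas --- precisely the feature your colour decomposition does not engage with. This is corroborated by the paper's subsequent proposition: once univalent vertices are quotiented away (the complex $\GC^{\geq 2}_{c,d}$), the cohomology contains $H(\GC_c)\otimes H(\GC_d)$, confirming that your $E_1$-page would indeed carry huge graph cohomology that must be cancelled by univalent-vertex contributions. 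To repair your approach you would, in effect, need to reprove the paper's antenna argument inside each colour sector, so the spectral-sequence-plus-colour-decomposition route does not yield a shortcut.
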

\begin{proof}

Let $\fcgc_{c,d}^0$ be the differential closure of the subspace

\[
\text{span}\left\langle (\begin{tikzpicture}[baseline={([yshift=-0.5ex]current bounding box.center)},scale=0.35,transform shape]
    \node (v0) [circle,draw] at (0,0) {};
    \node (v1) at (0,1) {};
    \draw (v0) edge (v1);
    \node at (0,-1) {};
    \end{tikzpicture},\begin{tikzpicture}[baseline={([yshift=-0.5ex]current bounding box.center)},scale=0.35,transform shape]
    \node (v0) [circle,draw] at (0,0) {};
    \node (v1) [circle,draw] at (1,0) {};
    \draw (v0) edge (v1);
    \end{tikzpicture}), (\Gamma_1,\begin{tikzpicture}[baseline={([yshift=-0.5ex]current bounding box.center)},scale=0.35,transform shape]
    \node (v0) [circle,draw] at (0,0) {};
    \node (v1) at (0,1) {\huge $E(\Gamma_1)$};
    \draw (v0) [bend left] edge (v1);
    \draw (v0) [bend right] edge (v1);
    \node at (0,-1) {};
    \end{tikzpicture}) \, \text{for $\Gamma_1 \in \fcgc_c$} \right\rangle.
\]

Then the cohomology of $\fcgc_{c,d}^0$ is two-dimensional. Indeed we first observe that $\delta(\begin{tikzpicture}[baseline={([yshift=-0.5ex]current bounding box.center)},scale=0.35,transform shape]
    \node (v0) [circle,draw] at (0,0) {};
    \node (v1) at (0,1) {};
    \draw (v0) edge (v1);
    \node at (0,-1) {};
    \end{tikzpicture},\begin{tikzpicture}[baseline={([yshift=-0.5ex]current bounding box.center)},scale=0.35,transform shape]
    \node (v0) [circle,draw] at (0,0) {};
    \node (v1) [circle,draw] at (1,0) {};
    \draw (v0) edge (v1);
    \end{tikzpicture})=0$ and as by connectivity of the elements in $\fcgc_{c,d}$ there has to be always an edge it has to be a cohomology class. Similarly $(\begin{tikzpicture}[baseline={([yshift=-0.5ex]current bounding box.center)},scale=0.35,transform shape]
    \node (v0) [circle,draw] at (0,0) {};
    \node (v1) [circle,draw] at (1,0) {};
    \draw (v0) edge (v1);
    \end{tikzpicture},\begin{tikzpicture}[baseline={([yshift=-0.5ex]current bounding box.center)},scale=0.35,transform shape]
    \node (v0) [circle,draw] at (0,0) {};
    \node (v1) at (0,1) {};
    \draw (v0) edge (v1);
    \node at (0,-1) {};
    \end{tikzpicture})$ is a cohomology class. In addition if $\#E(\Gamma_1) \geq 2$ we have

    \[
    \delta (\Gamma_1,\begin{tikzpicture}[baseline={([yshift=-0.5ex]current bounding box.center)},scale=0.35,transform shape]
    \node (v0) [circle,draw] at (0,0) {};
    \node (v1) at (0,1) {\huge $E(\Gamma_1)$};
    \draw (v0) [bend left] edge (v1);
    \draw (v0) [bend right] edge (v1);
    \node at (0,-1) {};
    \end{tikzpicture}) =  (\delta\Gamma_1,\begin{tikzpicture}[baseline={([yshift=-0.5ex]current bounding box.center)},scale=0.35,transform shape]
    \node (v0) [circle,draw] at (0,0) {};
    \node (v1) at (0,1) {\huge $E(\delta\Gamma_1)$};
    \draw (v0) [bend left] edge (v1);
    \draw (v0) [bend right] edge (v1);
    \node at (0,-1) {};
    \end{tikzpicture})
    \pm \sum_{\stackrel{I \sqcup J=E(\Gamma_1)}{I,J \neq \emptyset}}{(\Gamma_1^e,\begin{tikzpicture}[baseline={([yshift=-0.5ex]current bounding box.center)},scale=0.35,transform shape]
    \node (v0) [circle,draw] at (0,0) {};
    \node (v1) [circle,draw] at (1,0) {};
    \node (v2) at (0,1) {\huge $I$};
    \node (v3) at (1,1) {\huge $J$};
    \draw (v0) edge (v1);
    \draw (v0) [bend left] edge (v2);
    \draw (v0) [bend right] edge (v2);
    \draw (v1) [bend left] edge (v3);
    \draw (v1) [bend right] edge (v3);
    \node at (0,-1) {};
    \end{tikzpicture})}.
    \]

The last sum is never zero and thus $(\Gamma_1,\begin{tikzpicture}[baseline={([yshift=-0.5ex]current bounding box.center)},scale=0.35,transform shape]
    \node (v0) [circle,draw] at (0,0) {};
    \node (v1) at (0,1) {\huge $E(\Gamma_1)$};
    \draw (v0) [bend left] edge (v1);
    \draw (v0) [bend right] edge (v1);
    \node at (0,-1) {};
    \end{tikzpicture})$ is never a cycle. 

The theorem is proven if we show that the inclusion $\fcgc_{c,d}^0 \hookrightarrow \fcgc_{c,d}$ is a quasi-isomorphism or equivalently  if we show that the quotient complex $\fcgc_{c,d}^{\geq 1}:=\fcgc_{c,d}/\fcgc_{c,d}^0$ is acyclic. We consider on $\fcgc_{c,d}^{\geq 1}$ a filtration on the number of vertices of $\Gamma_2$, i.e. for $p\geq 0$

\[
F_{-p}=\text{span} \left\langle (\Gamma_1,\Gamma_2) \text{with $\#V(\Gamma_2) \geq p$} \right\rangle.
\]

Given a pair $(\Gamma_1,\Gamma_2)$ in the associated graded complex $\mathbf{gr}\fcgc_{c,d}^{\geq 1}$ the induced differential acts by splitting vertices in $\Gamma_1$ and attaching hairs to $\Gamma_2$. In particular the vertices of edges of $\Gamma_2$ stay invariant and we can consider the larger complex $\widetilde{\mathbf{gr}\fcgc}_{c,d}^{\geq 1}$ spanned by pairs $(\Gamma_1,\Gamma_2)$ with the vertices and edges of $\Gamma_2$ being totally ordered. Denote by $w_{\min}$ respectively $e_{\min}$ the minimal vertex respectively minimal edge of $\Gamma_2$. As the differential increases the number of hairs in $\Gamma_2$ we can consider the filtration on $\widetilde{\mathbf{gr}\fcgc_{c,d}}^{\geq 1}$ by the total number of hairs attached to the to the vertices in the set $V(\Gamma_2) \setminus \{w_{\min}\}$. Let $\widehat{\mathbf{gr}\fcgc_{c,d}}^{\geq 1}$ be the associated graded complex. For a pair $(\Gamma_1,\Gamma_2)$ the induced differential acts as above on $\Gamma_1$ but the new hair is only attached to the vertex $w_{\min}$. Call a vertex $v_{\min} \in V(\Gamma_1)$ \textit{very special} if it is of the form

\[
\begin{tikzpicture}[baseline={([yshift=-0.5ex]current bounding box.center)},scale=0.5,transform shape]
\node (v0) [circle,draw,label=right: \huge  $v_{\min}$] at (0,0) {};
\node (v1) at (0,1.5) {\huge $e_{\min}$};
\node (v2) [circle,draw] at (0,-2)  {};
\draw (v0) edge (v1);
\draw (v2)  edge (v0);
\node at (0.5,-1) {\huge $E_s$};
\end{tikzpicture}
\]

and the edge $E_s$ is uniquely attached as a hair to $w_{\min}$. Note that $v_{\min}$ being very special is only defined in a pair $(\Gamma_1,\Gamma_2)$. Next call a maximal chain of bivalent vertices bold vertices of the form, for $k \geq 1$,

\[
\begin{tikzpicture}[baseline={([yshift=-0.5ex]current bounding box.center)},scale=0.5,transform shape]
\node (v0) at (0,1.5) {\huge $e_{\min}$};
\node (v1) [circle,draw,label=right:\huge $v_{\min}$] at (0,0) {};
\node (v2) [circle,draw] at (0,-2) {};
\node (v3) [circle,draw] at (0,-3) {};
\draw (v0) edge (v1);
\draw (v1) edge (v2);
\draw (v2) [dotted] edge (v3);
\node (v4) [draw] at (0,-5) {};
\draw (v3) edge (v4);
\node at (0.5,-1) {\huge $E_{s_1}$};
\node at (0.5,-4) {\huge $E_{s_k}$};
\end{tikzpicture},
\]

the minimal antenna if all edges $E_{s_1},E_{s_2},\cdots,E_{s_k}$ are uniquely attached as hairs to the vertex $w_{\min}$. The ending vertex of the minimal antenna is either of valency different from $2$, not bold or one of its edges is attached as a hair to a vertex different from $w_{\min}$. Call the vertices of the above \textit{special} and consider a filtration on $\widehat{\mathbf{gr}\fcgc_{c,d}}^{\geq 1}$ by the number of non-special vertices in pairs $(\Gamma_1,\Gamma_2)$. Let $\overline{\mathbf{gr}\fcgc_{c,d}}^{\geq 1}$ be the associated graded complex.
We finally show that $\overline{\mathbf{gr}\fcgc_{c,d}}^{\geq 1}$ is acyclic.

Let $C=\bigoplus_{k \geq 1} C_k$ be the subcomplex where $C_k$ is generated by the following pair

\[
(\Gamma:=\begin{tikzpicture}[baseline={([yshift=-0.5ex]current bounding box.center)},scale=0.5,transform shape]
\node (v0) at (0,1.5) {\huge $e_{\min}$};
\node (v1) [circle,draw,label=right:\huge $v_{\min}$] at (0,0) {};
\node (v2) [circle,draw] at (0,-2) {};
\node (v3) [circle,draw] at (0,-3) {};
\draw (v0) edge (v1);
\draw (v1) edge (v2);
\draw (v2) [dotted] edge (v3);
\node (v4) [draw] at (0,-5) {};
\draw (v3) edge (v4);
\node at (0.5,-1) {\huge $E_{s_1}$};
\node at (0.5,-4) {\huge $E_{s_k}$};
\end{tikzpicture},\begin{tikzpicture}[baseline={([yshift=-0.5ex]current bounding box.center)},scale=0.5,transform shape]
\node (v0) [circle,draw] at (0,0) {};
\node (v1) [circle,draw,label=below:\huge $w_{\min}$] at (1,0) {};
\draw (v0) edge (v1);
\node (v2) at (1,1) {\huge $E(\Gamma)$};
\draw (v1) [bend left] edge (v2);
\draw (v1) [bend right] edge (v2);
\node at (0,-1) {};
\end{tikzpicture}).
\]

Then $C$ is acyclic as the differential $\delta:C_k \ra C_{k+1}$ is injective for $k$ odd and zero for $k$ even. In addition the complex splits as $\overline{\mathbf{gr}\fcgc_{c,d}}^{\geq 1}=C \oplus C'$. For $k \geq 0$ denote by $C'_k$ the subspace of $C'$ spanned by pairs $(\Gamma_1,\Gamma_2)$ with the minimal antenna being of length $k$. The differential $\delta:C'_k \ra C'_{k+1}$ is  an injection for $k$ even and zero for $k$ odd (which was the opposite for the complex $C$). Hence the complex $C'$ is acyclic and the result is proven.

\end{proof}

As a small application of the above result, we will briefly discuss, what happens if we restrict ourselves to graphs with univalent vertices. More precisely, let $\GC_{c,d}^1$ be the subcomplex spanned by pairs $(\Gamma_1,\Gamma_2)$ with at least one univalent vertex. Then, contrary to the standard graph complex, the cohomology of $\GC_{c,d}^1$ is highly non-trivial. To see this, we first need to investigate the quotient complex $\GC_{c,d}^{\geq 2}:=\fcgc_{c,d}/\GC_{c,d}^1$. It is spanned by pairs $(\Gamma_1,\Gamma_2)$ with all vertices bivalent and an induced differential which does not create univalent vertices. The cohomology of $\GC_{c,d}^{\geq 2}$ is unknown at the moment, but we have a partial result:

\begin{prop}
    There is an inclusion of complexes

\ba{rccc}
\sym: &\GC_c \otimes \GC_d & \lra & \GC_{c,d}^{\geq 2} \\
& \Gamma_1 \otimes \Gamma_2 & \lma & \sum_{\stackrel{f_1:E(\Gamma_2) \ra V(\Gamma_1)}{f_2:E(\Gamma_1) \ra V(\Gamma_2)}}{(\Gamma_1^{f_1}),\Gamma_2^{f_2}}
\ea

i.e. we sum over all possible hair attachments. In addition the map induces an inclusion on the level of cohomology

\[
H(\GC_c) \otimes H(\GC_d) \ra H(\GC_{c,d}^{\geq 2}).
\]

\end{prop}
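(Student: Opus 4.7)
The proof splits into two parts: (a) verifying that $\sym$ is a chain map, and (b) establishing that the induced map on cohomology is injective.

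\textbf{Part (a).} For the chain map property $\delta \circ \sym = \sym \circ d$, I would compute $\delta(\sym(\Gamma_1 \otimes \Gamma_2))$ and $\sym(d\Gamma_1 \otimes \Gamma_2 \pm \Gamma_1 \otimes d\Gamma_2)$ and match them term by term. The key combinatorial identity is that applying $\delta'$ to a summand $(\Gamma_1^{f_1}, \Gamma_2^{f_2})$ of $\sym(\Gamma_1 \otimes \Gamma_2)$, which splits a vertex $v \in V(\Gamma_1^{f_1})$ (redistributing the adjacent edges and hairs among the two new vertices) and attaches a new hair labelled by the new edge to some $w \in V(\Gamma_2^{f_2})$, produces, when summed over all $(v,w,f_1,f_2)$ and redistributions, precisely the same expression as first applying $d$ to $\Gamma_1$ in $\GC_c$ and then applying $\sym$ to the resulting enlarged graphs. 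The extra edge in $d\Gamma_1$ provides the extra hair on $\Gamma_2$ that $\delta'$ introduces, and all hair distributions match because $\sym$ sums over all of them. A symmetric argument handles $\delta''$.

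\textbf{Part (b).} For the cohomological injection I would use a spectral sequence argument. Observe that the quantities $b_i := \#E(\Gamma_i) - \#V(\Gamma_i)$ (essentially the first Betti number of $\Gamma_i$) are both preserved by $\delta$, since $\delta'$ increments $\#V(\Gamma_1)$ and $\#E(\Gamma_1)$ by one while leaving $\Gamma_2$'s graph structure intact (and symmetrically for $\delta''$). Hence $\fcgc_{c,d}/\GC_{c,d}^1$ decomposes into a direct sum of subcomplexes indexed by $(b_1,b_2)$. Within each summand, I would filter by $\#V(\Gamma_1)$ decreasingly, making $\delta'$ strictly filtration-raising and $\delta''$ filtration-preserving, so that on $E_0$ only $\delta''$ survives. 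The $E_0$-differential acts by splitting vertices in $\Gamma_2$ and attaching hairs to $\Gamma_1$, and a careful analysis — nested filtrations by the number of non-special vertices, mirroring the structure of the proof of Theorem \ref{thm:cohomology_fcgc} — should show that the hair-twisted contributions are acyclic, leaving a summand that essentially equals $\GC_c^{\geq 2}$-chains tensored with a residual $\Gamma_2$-factor. Iterating with a further filtration by $\#V(\Gamma_2)$ on the residual complex then isolates $H(\GC_c) \otimes H(\GC_d)$ as a subspace of $H(\GC_{c,d}^{\geq 2})$. The map $\sym$ respects the filtration (each $\sym(\Gamma_1 \otimes \Gamma_2)$ has the same $\#V(\Gamma_i)$ and $b_i$ as $\Gamma_i$) and therefore induces a morphism of spectral sequences which produces the desired section at each page.

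\textbf{Main obstacle.} The delicate step is the $E_0$-page computation, isolating the $\GC_c \otimes \GC_d$-summand from the larger hair-twisted complex. This is expected to require a layered contracting-homotopy construction analogous to the one in the proof of Theorem \ref{thm:cohomology_fcgc}, where one successively peels off bold, special, and very special vertices. Identifying the right canonical structures here — in particular, the analogue of the "minimal antenna" attached to $w_{\min}$ — and verifying that the image of $\sym$ survives each reduction is where the main technical work will lie.
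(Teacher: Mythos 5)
Your part (a) is the right kind of computation but omits the one genuinely delicate point, and your part (b) takes a route that, as sketched, does not close. On (a): it is \emph{not} true that every term of $\delta$ applied to $\sym(\Gamma_1\otimes\Gamma_2)$ matches a term of $\sym(d\Gamma_1\otimes\Gamma_2\pm\Gamma_1\otimes d\Gamma_2)$. When a vertex split sends all actual edges to one of the two new vertices, the other new vertex carries only hairs and the new edge, i.e.\ it is univalent; such terms have no counterpart in $\sym\circ d$, since $d$ in $\GC_c$ never creates univalent vertices. The chain-map property holds only because exactly these terms are killed in the quotient $\GC_{c,d}^{\geq 2}=\fcgc_{c,d}/\GC_{c,d}^{1}$ --- this is the single place where the proof genuinely uses the quotient, and your claim that ``all hair distributions match because $\sym$ sums over all of them'' passes over it.

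On (b), the spectral-sequence plan has a structural gap: injectivity of a morphism on one page of a spectral sequence does not propagate to later pages or to the abutment (a class surviving in the source can map into the image of a later differential in the target), so ``produces the desired section at each page'' is an assertion, not an argument; one needs an actual splitting. Moreover your crucial $E_0$-step is exactly what you concede is the ``main obstacle'' and is left unproved --- note that the paper states $H(\GC_{c,d}^{\geq 2})$ is unknown, so any filtration computation that ``isolates'' the tensor factor would have to be substantially more delicate than the antenna argument of Theorem \ref{thm:cohomology_fcgc}, which computes a much simpler object. The paper's actual argument requires none of this machinery: the image of $\sym$, i.e.\ the span of fully hair-symmetrized pairs, admits the canonical complement $C$ spanned by sums $\Gamma$ in which some hair attachment is missing, and $C$ is itself a subcomplex --- if $\Gamma$ misses hair attachments then so does $d(\Gamma)$, because the differential cannot complete a partial symmetrization, hence $d(\Gamma)\notin\sym(\GC_c\otimes\GC_d)$. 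This gives a direct sum decomposition of complexes $\GC_{c,d}^{\geq 2}=\sym(\GC_c\otimes\GC_d)\oplus C$, from which the injection $H(\GC_c)\otimes H(\GC_d)\ra H(\GC_{c,d}^{\geq 2})$ is immediate. You should replace your part (b) by this splitting observation (or supply an equivalent splitting); without it your sketch does not establish the cohomological injectivity.
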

\begin{proof}

We need to show that the map $\sym$ respects the differentials. Indeed, while splitting a vertex in $\fcgc_{c,d}$, the only situation where we do not sum over all hair attachments, is when we create univalent vertices (as we do not create univalent bold vertices). As these terms disappear in $\GC_{c,d}^{\geq 2}$ we conclude that $\sym$ is a morphism of complexes.

In addition the complex $\GC_{c,d}^{\geq 2}$ splits as

\[
\GC_{c,d}^{\geq 2}=\sym(\GC_c \otimes \GC_d) \oplus C,
\]

(independent of the choice of a complement of $\sym(\GC_c \otimes \GC_d)$). This follows easily from the observation that if in a sum $\Gamma \in \GC_{c,d}^{\geq 2}$ there are hair attachments missing, then also in $d(\Gamma)$ there are some hair attachments missing and thus $d(\Gamma) \notin \sym(\GC_c \otimes \GC_d)$. This shows the inclusion in cohomology.

\end{proof}

The cohomology of $\GC_{c,d}^1$ can now be finally computed:

\begin{prop}

The cohomology of $\GC_{c,d}^1$ is equal to the cohomology of $\GC_{c,d}^{\geq 2}$ shifted by $1$, up to the additional classes $(\begin{tikzpicture}[baseline={([yshift=-0.5ex]current bounding box.center)},scale=0.35,transform shape]
    \node (v0) [circle,draw] at (0,0) {};
    \node (v1) at (0,1) {};
    \draw (v0) edge (v1);
    \node at (0,-1) {};
    \end{tikzpicture},\begin{tikzpicture}[baseline={([yshift=-0.5ex]current bounding box.center)},scale=0.35,transform shape]
    \node (v0) [circle,draw] at (0,0) {};
    \node (v1) [circle,draw] at (1,0) {};
    \draw (v0) edge (v1);
    \end{tikzpicture})$ and $(\begin{tikzpicture}[baseline={([yshift=-0.5ex]current bounding box.center)},scale=0.35,transform shape]
    \node (v0) [circle,draw] at (0,0) {};
    \node (v1) [circle,draw] at (1,0) {};
    \draw (v0) edge (v1);
    \end{tikzpicture},\begin{tikzpicture}[baseline={([yshift=-0.5ex]current bounding box.center)},scale=0.35,transform shape]
    \node (v0) [circle,draw] at (0,0) {};
    \node (v1) at (0,1) {};
    \draw (v0) edge (v1);
    \node at (0,-1) {};
    \end{tikzpicture})$. 
    
In particular there is an inclusion in cohomology

\[
H^\bullet(\GC_c \otimes \GC_d) \lra H^{\bullet+1}(\GC_{c,d}^1)
\]
    
\end{prop}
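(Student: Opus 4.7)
The plan is to extract the result from the short exact sequence of complexes
\[
0 \lra \GC_{c,d}^1 \lra \fcgc_{c,d} \lra \GC_{c,d}^{\geq 2} \lra 0
\]
combined with the computation of $H(\fcgc_{c,d})$ carried out in Theorem \ref{thm:cohomology_fcgc}. A quick preliminary check is that $\GC_{c,d}^1$ is really closed under $\delta$: splitting a vertex different from a given univalent vertex $v$ preserves $v$, while splitting $v$ itself produces one vertex of valency two and another univalent vertex, and adding a disconnected hair-free vertex cannot destroy any existing univalent vertex. Once this is in place, passing to cohomology gives the long exact sequence
\[
\cdots \lra H^{\bullet-1}(\GC_{c,d}^{\geq 2}) \stackrel{\partial}{\lra} H^\bullet(\GC_{c,d}^1) \stackrel{\iota_*}{\lra} H^\bullet(\fcgc_{c,d}) \stackrel{\pi_*}{\lra} H^\bullet(\GC_{c,d}^{\geq 2}) \lra \cdots.
\]

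The decisive observation is that both generators of $H(\fcgc_{c,d})$ produced by Theorem \ref{thm:cohomology_fcgc} have only univalent vertices in their representatives, and therefore already belong to the subcomplex $\GC_{c,d}^1$. Hence the map $\iota_*$ is surjective, which forces $\pi_* = 0$ in every degree. The long exact sequence then collapses to short exact sequences of vector spaces
\[
0 \lra H^{\bullet-1}(\GC_{c,d}^{\geq 2}) \stackrel{\partial}{\lra} H^\bullet(\GC_{c,d}^1) \stackrel{\iota_*}{\lra} H^\bullet(\fcgc_{c,d}) \lra 0,
\]
each of which splits (we are over a field), yielding
\[
H^\bullet(\GC_{c,d}^1) \cong H^{\bullet-1}(\GC_{c,d}^{\geq 2}) \oplus H^\bullet(\fcgc_{c,d}).
\]
This is precisely the first assertion of the proposition.

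The ``in particular'' part is then immediate: composing the inclusion $H^\bullet(\GC_c) \otimes H^\bullet(\GC_d) \hookrightarrow H^\bullet(\GC_{c,d}^{\geq 2})$ from the previous proposition with the injection $\partial \colon H^\bullet(\GC_{c,d}^{\geq 2}) \hookrightarrow H^{\bullet+1}(\GC_{c,d}^1)$ just established gives the desired injection $H^\bullet(\GC_c \otimes \GC_d) \hookrightarrow H^{\bullet+1}(\GC_{c,d}^1)$. The only non-formal step in the whole argument is the observation that the two classes of $H(\fcgc_{c,d})$ actually sit inside $\GC_{c,d}^1$; this is evident from their representatives, but it is exactly what causes the long exact sequence to degenerate so cleanly, so it should be emphasized rather than buried.
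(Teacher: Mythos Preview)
Your argument is correct and follows essentially the same route as the paper: both proofs use the short exact sequence relating $\GC_{c,d}^1$, $\fcgc_{c,d}$, and $\GC_{c,d}^{\geq 2}$ together with Theorem~\ref{thm:cohomology_fcgc}. The only cosmetic difference is that the paper first excises the two special classes to obtain an acyclic middle term $\overline{\fcgc_{c,d}}$, whereas you keep the full complexes and instead observe that these classes lie in $\GC_{c,d}^1$ so that $\iota_*$ is surjective; the two bookkeeping choices are equivalent.
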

\begin{proof}

Define $\overline{\fcgc_{c,d}}=\fcgc_{c,d}\setminus\{(\begin{tikzpicture}[baseline={([yshift=-0.5ex]current bounding box.center)},scale=0.35,transform shape]
    \node (v0) [circle,draw] at (0,0) {};
    \node (v1) at (0,1) {};
    \draw (v0) edge (v1);
    \node at (0,-1) {};
    \end{tikzpicture},\begin{tikzpicture}[baseline={([yshift=-0.5ex]current bounding box.center)},scale=0.35,transform shape]
    \node (v0) [circle,draw] at (0,0) {};
    \node (v1) [circle,draw] at (1,0) {};
    \draw (v0) edge (v1);
    \end{tikzpicture}), (\begin{tikzpicture}[baseline={([yshift=-0.5ex]current bounding box.center)},scale=0.35,transform shape]
    \node (v0) [circle,draw] at (0,0) {};
    \node (v1) [circle,draw] at (1,0) {};
    \draw (v0) edge (v1);
    \end{tikzpicture},\begin{tikzpicture}[baseline={([yshift=-0.5ex]current bounding box.center)},scale=0.35,transform shape]
    \node (v0) [circle,draw] at (0,0) {};
    \node (v1) at (0,1) {};
    \draw (v0) edge (v1);
    \node at (0,-1) {};
    \end{tikzpicture})\}$ and $\overline{\GC_{c,d}^1}=\GC^1_{c,d}\setminus\{(\begin{tikzpicture}[baseline={([yshift=-0.5ex]current bounding box.center)},scale=0.35,transform shape]
    \node (v0) [circle,draw] at (0,0) {};
    \node (v1) at (0,1) {};
    \draw (v0) edge (v1);
    \node at (0,-1) {};
    \end{tikzpicture},\begin{tikzpicture}[baseline={([yshift=-0.5ex]current bounding box.center)},scale=0.35,transform shape]
    \node (v0) [circle,draw] at (0,0) {};
    \node (v1) [circle,draw] at (1,0) {};
    \draw (v0) edge (v1);
    \end{tikzpicture}), (\begin{tikzpicture}[baseline={([yshift=-0.5ex]current bounding box.center)},scale=0.35,transform shape]
    \node (v0) [circle,draw] at (0,0) {};
    \node (v1) [circle,draw] at (1,0) {};
    \draw (v0) edge (v1);
    \end{tikzpicture},\begin{tikzpicture}[baseline={([yshift=-0.5ex]current bounding box.center)},scale=0.35,transform shape]
    \node (v0) [circle,draw] at (0,0) {};
    \node (v1) at (0,1) {};
    \draw (v0) edge (v1);
    \node at (0,-1) {};
    \end{tikzpicture})\}$. The result is now proven using the short exact sequence

\[
0 \lra \overline{\GC_{c,d}^1} \lra \overline{\fcgc_{c,d}} \lra \overline{\fcgc_{c,d}}/\overline{\GC_{c,d}^1} \cong \GC_{c,d}^{\geq 2} \lra 0
\]

and the fact that $\overline{\fcgc_{c,d}}$ is acyclic by Theorem \ref{thm:cohomology_fcgc}. 

\end{proof}

\end{section}

\begin{section}{Main theorem}

Recall that we are interested in the deformation complex of the morphism defined in Lemma \ref{lemma:morphism_lieb_dlieb}. The main contribution is to compute its cohomology.

\begin{thm} \label{thm:main_theorem}
    \[
    H^\bullet(\defDLieb)=H^\bullet(\defLieb).
    \]
\end{thm}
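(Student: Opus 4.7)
The plan is to prove that the quotient $Q := \defDLieb / \defLieb$ is acyclic, so that the monomorphism $\defLieb \hookrightarrow \defDLieb$ is a quasi-isomorphism, by reducing the problem to the cohomology calculation for $\fcgc_{c,d}$ established in Theorem \ref{thm:cohomology_fcgc}. The central idea is that the natural projection $\dLiebC{c,d} \twoheadrightarrow \dLiebC{c,d}/I = \gra_{c,d}$ induces a morphism of deformation complexes $\pi:\defDLieb \to \fcgc_{c,d}$, with kernel $K$ consisting of cochains valued in the ideal $I$ of graphs with at least one internal edge.

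First I would introduce on $\defDLieb$ a decreasing filtration by the number of internal edges. Because compositions in $\dLiebC{c,d}$ and the cooperadic part of the differential both do not decrease this number (as observed in the paper), the filtration is compatible with the differential and descends to $\defLieb$ and to $Q$. The zeroth graded piece is exactly $\fcgc_{c,d}$ (via the identification $\dLiebC{c,d}/I \cong \gra_{c,d}$), while higher-weight graded pieces consist of cochains whose values carry genuine internal-edge structure. I would verify that $\defLieb$ embeds into $\defDLieb$ respecting this filtration in a controlled way: the image of the generators of $\lieb_{c,d}$ sits in weight zero (since $i$ of a single generator is a tree with no internal edges), while composites of generators produce entries of positive internal-edge weight.

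Next, on the $E^0$-page of the spectral sequence for $Q$, the graded piece at weight zero is the quotient of $\fcgc_{c,d}$ by the image of the composition $\defLieb \to \defDLieb \to \fcgc_{c,d}$. Theorem \ref{thm:cohomology_fcgc} says that $H^\bullet(\fcgc_{c,d})$ is two-dimensional, generated by the pairs $\left(\icg{vertex-hair},\icg{edge}\right)$ and $\left(\icg{edge},\icg{vertex-hair}\right)$. I would check that these two cocycles lift in $\defDLieb$ to classes already represented in $\defLieb$, namely to the tautological/rescaling cocycles of $\defLieb$ supplied by the identity map $\lieb_{c,d} \to \lieb_{c,d}$ (in appropriate bi-arities). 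This kills both classes in the quotient, so the weight-zero graded piece of $Q$ becomes acyclic. For positive weights, the graded differential acts only on the internal-edge-free part of a graph while the internal skeleton remains inert, so the same filtration trick used in the proof of Theorem \ref{thm:cohomology_fcgc} (reducing, via auxiliary filtrations by the multiset of hairs and by antennae attached to a minimal vertex/edge) should show acyclicity layer by layer.

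The main obstacle is the identification in Step 3: showing precisely that the two surviving classes in $H^\bullet(\fcgc_{c,d})$ lie in the image of $\defLieb$ under the composition $\defLieb \to \fcgc_{c,d}$, rather than merely being coboundaries in $\defDLieb$ for some unrelated reason. This requires matching the explicit hair-edge cocycles of the entangled graph complex with explicit cocycles in the convolution complex $\defLieb$, and tracking how compositions with $i$ unfold them. A secondary technical point is to ensure convergence of the spectral sequence associated with the internal-edge filtration, which should follow from the standard completeness arguments but must be checked in the presence of infinite direct products over arities.
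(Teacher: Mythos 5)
Your proposal is correct in outline and follows essentially the same route as the paper: filter $\defDLieb$ by the number of internal edges, identify the weight-zero layer with $\fcgc_{c,d}$ modulo the two corolla classes of Theorem \ref{thm:cohomology_fcgc} (these are precisely the weight-zero part of $\defLieb$, i.e.\ the images under $i$ of the bracket and cobracket generators, so the ``lifting'' issue you flag as the main obstacle resolves immediately), and prove the positive-weight, star-vertex layers acyclic by minimal-antenna filtration arguments. The paper merely packages this more cleanly: since the differential cannot decrease the number of non-star vertices of valency at least two, $\defDLieb$ splits as a direct sum of complexes $\defLieb \oplus C$, which sidesteps your quotient spectral sequence and its convergence caveat entirely; and the one step you leave as ``the same filtration trick should work layer by layer'' is where the paper does genuine additional work, anchoring the minimal antenna at the minimal \emph{star} edge (rather than at an edge of $\Gamma_2$ as in Theorem \ref{thm:cohomology_fcgc}) and first splitting each layer according to whether that edge meets an input or an output vertex before running the parity argument on antenna length.
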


Before we prove this, we will explain how to interpret elements of $\defDLieb$ as (hairy) graphs with three types of vertices. Given $\Gamma \in \defDLieb$ we first contract each irreducible connected component into a single vertex, called \textit{star} vertex, which is decorated by the corresponding element in $\defLieb$ together with a partition of the input and output vertices. Next we delete each star vertex decorated by \icg{Corolla}, add an edge between the two input vertices and add an hair decorated by said edge to the output vertex. Similarly for the star vertices decorated by \icg{coCorolla}.

\begin{ex}

\[
\begin{tikzpicture}[baseline={([yshift=-0.5ex]current bounding box.center)},scale=0.5,transform shape]
\node (v0) [circle,draw,fill] at (0,0) {};
\node (v1) [circle,draw,fill] at (1,1) {};
\node (v2) [circle,draw] at (-1,2) {};
\node (v3) [circle,draw] at (1,2) {};
\node (v4) [circle,draw] at (0,-1) {};
\node (v5) [circle,draw,fill] at (2,0) {};
\node (v6) [circle,draw] at (2,-1) {};
\draw (v0) edge (v1);
\draw (v0) edge (v2);
\draw (v1) [bend left]  edge (v3);
\draw (v1) [bend right]  edge (v3);
\draw (v0) edge (v4);
\draw (v4) edge (v5);
\draw (v6) edge (v5);
\draw (v5) [bend right] edge (v3);
\end{tikzpicture} \lra\begin{tikzpicture}[baseline={([yshift=-0.5ex]current bounding box.center)},scale=0.5,transform shape]
\node (v0) [star,star points=10,draw] at (0,0) {};
\node (v1) [circle,draw] at (-1,1) {};
\node (v2) [circle,draw] at (0,-1) {};
\node (v3) [circle,draw] at (0,-2) {};
\node (v4) [circle,draw] at (1,1) {};
\node (v5) at (1,2) {\huge $e$};
\draw (v0) edge (v1);
\draw (v0) edge (v2);
\draw (v0) [bend right] edge (v4);
\draw (v0) [bend left] edge (v4);
\draw (v2) edge (v3);
\draw (v4) edge (v5);
\node at (0.5,-1.5) {\huge $e$};
\node at (0.85,-0.5) {\icgDec{starVertexDecoration1}};
\end{tikzpicture}
\]

\[
\begin{tikzpicture}[baseline={([yshift=-0.5ex]current bounding box.center)},scale=0.5,transform shape]
\node (v0) [circle,draw,fill] at (0,0) {};
\node (v1) [circle,draw,fill] at (1,1) {};
\node (v2) [circle,draw] at (-1,2) {};
\node (v3) [circle,draw] at (1,2) {};
\node (v4) [circle,draw] at (0,-1) {};
\node (v5) [circle,draw,fill] at (2,0) {};
\node (v6) [circle,draw] at (2,-1) {};
\draw (v0) edge (v1);
\draw (v0) edge (v2);
\draw (v1)  edge (v3);
\draw (v1)  edge (v2);
\draw (v0) edge (v4);
\draw (v4) edge (v5);
\draw (v6) edge (v5);
\draw (v5) [bend right] edge (v3);
\end{tikzpicture} \lra \begin{tikzpicture}[baseline={([yshift=-0.5ex]current bounding box.center)},scale=0.5,transform shape]
\node (v0) [star,star points=10,draw] at (0,0) {};
\node (v1) [circle,draw] at (-1,1) {};
\node (v2) [circle,draw] at (0,-1) {};
\node (v3) [circle,draw] at (0,-2) {};
\node (v4) [circle,draw] at (1,1) {};
\node (v5) at (1,2) {\LARGE $e$};
\draw (v0) edge (v2);
\draw (v0) [bend left] edge (v1);
\draw (v0) [bend right] edge (v1);
\draw (v0) edge (v4);
\draw (v2) edge (v3);
\draw (v4) edge (v5);
\node at (0.5,-1.5) {\LARGE $e$};
\node at (0.85,-0.5) {\icgDec{starVertexDecoration2}};
\end{tikzpicture}
\]

\end{ex}

Note that there are no edges between two star vertices. Any edge attached to a star vertex will be called \textit{star} edge.

The differential $\delta$ of $\defDLieb$ can be written $\delta=\delta_{\text{Lieb}}+ \delta_{\text{Graph}}$ where the first part increases the number of internal edges (and creates star vertices) while the second part acts by splitting the input and output vertices as in $\fcgc_{c,d}$. 

\begin{proof} Theorem \ref{thm:main_theorem}

We can see $\defLieb$ in $\defDLieb$ as the subspace spanned by the elements with all input and output vertices being univalent, i.e.

\[
\defLieb \cong \text{span} \left\langle \begin{tikzpicture}[baseline={([yshift=-0.5ex]current bounding box.center)},scale=0.35,transform shape]
\node (v0) [circle,draw] at (0,1.25) {};
\node (v1) [circle,draw,fill] at (0,0) {};
\node (v2) [circle,draw] at (-1,-1) {};
\node (v3) [circle,draw] at (1,-1) {};
\draw (v0) edge (v1);
\draw (v1) edge (v2);
\draw (v1) edge (v3);
\end{tikzpicture},
\begin{tikzpicture}[baseline={([yshift=-0.5ex]current bounding box.center)},scale=0.35,transform shape]
\node (v0) [circle,draw] at (0,-1.25) {};
\node (v1) [circle,draw,fill] at (0,0) {};
\node (v2) [circle,draw] at (-1,1) {};
\node (v3) [circle,draw] at (1,1) {};
\draw (v0) edge (v1);
\draw (v1) edge (v2);
\draw (v1) edge (v3);
\end{tikzpicture},
\includegraphics[valign=c, scale=0.35]{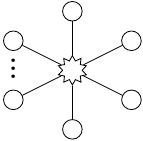} \right\rangle.
\]

As the graph part of the differential cannot create univalent bold vertices we have $\delta_{\text{Graph}} \equiv 0 $ on $\defLieb$ and there is an inclusion of complexes $\defLieb \hookrightarrow \defDLieb$. In addition the differential cannot reduce the number of vertices of valency at least $2$ and thus the complex $\defDLieb$ splits

\[
\defDLieb=\defLieb \oplus C,
\]

where $C$ the complex spanned by elements with at least one non-star vertex of valency greater than $1$. It remains to show that $C$ is acyclic. Consider a filtration on the number of internal edges. Then the induced differential on the associated graded complex $\mathbf{gr}C$ cannot create any new star vertices and thus the complex splits

\[
\mathbf{gr}C=\bigoplus_{N \geq 0}C_N,
\]

where $N$ is spanned by graphs having $N$ star vertices. Let $N \geq 1$. We show that $C_N$ is acyclic. Since the differential cannot create star vertices, the set of star edges is invariant and thus we can consider the complex $\widetilde{C}_N$ where the star edges are totally ordered. As there is at least one non-star vertex of valency greater than $1$ and by connectivity of the graphs, there is at least one star edge connected to a vertex of valency at least $2$. Denote by $e_{\min}$ the minimal such edge. We can split the complex $\widetilde{C}_{N}=\widetilde{C}_{N,in} \oplus \widetilde{C}_{N,out}$ where $\widetilde{C}_{N,in}$ respectively $\widetilde{C}_{N,out}$ is spanned by graphs where $e_{\min}$ is connected to an input respectively output vertex. We will show that $\widetilde{C}_{N,in}$ is acyclic, the proof for $\widetilde{C}_{N,out}$ is obtained by changing the roles of input and output vertices.

Consider a filtration on $\widetilde{C}_{N,in}$ by the number of output vertices. By construction the differential on $\mathbf{gr}\widetilde{C}_{N,in}$ acts only by splitting input vertices and attaches hairs to output vertices. In particular the set of output vertices is invariant and thus we can consider the complex $\widehat{\mathbf{gr}{C}}_{N,in}$ where we assume a total ordering on the set of output vertices. Denote by $w_{\min}$ the minimal output vertex and define a minimal antenna as a maximal chain of bivalent bold input vertices as below ($k \geq 1)$

\[
\begin{tikzpicture}[baseline={([yshift=-0.5ex]current bounding box.center)},scale=0.5,transform shape]
\node (v0) [star,star points=10,draw] at (0,0) {};
\node (v1) [circle,draw] at (2,0) {};
\draw (v0) edge (v1);
\node at (1,0.3) {\huge $e_{\min}$};
\node (v2) at (4,0) [circle,draw] {};
\draw (v1) edge (v2);
\node (v3) at (3,0.3) {\huge $E_{s_1}$};
\node (v4) [circle,draw] at (5,0) {};
\draw (v2) [dotted] edge (v4);
\node (v5)  [draw,fill] at (7,0) {};
\node at (6,0.3) {\huge $E_{s_k}$};
\draw (v4) edge (v5);
\end{tikzpicture},
\]

where we assume that all edges $E_{s_1},E_{s_2},\cdots,E_{s_{k}}$ are attached uniquely as hairs to the vertex $w_{\min}$. The square vertex is either a star vertex (and in that case we do not have a condition on $E_{s_k}$), an input vertex of valency different from $2$ or a vertex of valency $2$ which is either not bold or one of its edges is not uniquely attached as hair to $w_{\min}$. Call the vertices in the above antenna special and consider a filtration on the number of non-special vertices. Then the associated graded complex $\overline{\mathbf{gr}{C}}_{N,in}=\bigoplus_{k \geq 1}C'_k$ splits by the length of the minimal antenna. The differential $\delta_{\text{Graph}}:C'_k \ra C'_{k+1}$ is an injection if $k$ is odd and zero for $k$ even. Hence we conclude that $\overline{\mathbf{gr}{C}}_{N,in}$ is acyclic.

It remains to show that $C_0$ is acyclic. As the elements of $C_0$ do not have star vertices, they are given by two disjoint hairy graphs and thus the complex is isomorphic to $\fcgc_{c,d}\setminus\{(\begin{tikzpicture}[baseline={([yshift=-0.5ex]current bounding box.center)},scale=0.35,transform shape]
    \node (v0) [circle,draw] at (0,0) {};
    \node (v1) at (0,1) {};
    \draw (v0) edge (v1);
    \node at (0,-1) {};
    \end{tikzpicture},\begin{tikzpicture}[baseline={([yshift=-0.5ex]current bounding box.center)},scale=0.35,transform shape]
    \node (v0) [circle,draw] at (0,0) {};
    \node (v1) [circle,draw] at (1,0) {};
    \draw (v0) edge (v1);
    \end{tikzpicture}),(\begin{tikzpicture}[baseline={([yshift=-0.5ex]current bounding box.center)},scale=0.35,transform shape]
    \node (v0) [circle,draw] at (0,0) {};
    \node (v1) [circle,draw] at (1,0) {};
    \draw (v0) edge (v1);
    \end{tikzpicture},\begin{tikzpicture}[baseline={([yshift=-0.5ex]current bounding box.center)},scale=0.35,transform shape]
    \node (v0) [circle,draw] at (0,0) {};
    \node (v1) at (0,1) {};
    \draw (v0) edge (v1);
    \node at (0,-1) {};
    \end{tikzpicture})\}$, as these elements correspond to \icg{Corolla} and \icg{coCorolla} which are elements in $\defLieb$. By Theorem \ref{thm:cohomology_fcgc} we conclude that $C_0$ is acyclic as we removed all cohomology classes. This finishes the proof of the theorem.

\end{proof}

\begin{rem}
    The above result implies that any homotopy non-trivial deformation $\gamma$ of $i$ comes from the infinitesimal deformations of the identity map $id:\lieb_{c,d}\ra \lieb_{c,d}$. It has been proven in \cite{merkulov_deformation_liebialgebras} that every infinitesimal deformation of $id$ exponentiates to a genuine deformation. As $\dFun$ is a functor, the latter result implies that every such $\gamma$ exponentiates to a genuine deformation of $i$.
\end{rem}

\end{section}

\bibliographystyle{alpha}
\bibliography{bib}

\end{document}